\title{Fixed Points and Components of equivalued affine Springer fibers}
\author{Pablo Boixeda Alvarez}
\newtheorem{thm}{Theorem}[section]
\newtheorem{alg}{Algorithm}
\newtheorem{lem}{Lemma}[section]
\newtheorem{prop}{Proposition}[section]
\theoremstyle{remark}
\newtheorem{rmk}{Remark}[section]
\theoremstyle{definition}
\newtheorem{definition}{Definition}[section]
\newtheorem{case}{Case}
\begin{document}

\begin{abstract}
 For G a semisimple algebraic group, we revisit the description of the components of the affine Springer fiber given by $ts$, with $s$ a regular semisimple element. We then compute the fixed points of each component of a particular affine Springer fiber for Type A 
\end{abstract}
\maketitle
\section{Introduction}

Let $G$ be a semisimple group, $B$ a Borel and $N$ the unipotent radical. Denote by $\mathcal{K}=\mathbb{C}((t))$ and $\mathcal{O}$ and let the set of Iwahori subalgebras $\mathcal{F}l$ the affine flag variety. This is an ind-scheme, with points over $\mathbb{C}$ given by $G(\mathcal{K})/\mathfrak{B}$, where $\mathfrak{B}\subset G(\mathcal{O})$ is the Iwahori subgroup lifting $B$ under the map $t\mapsto 0$. We denote by $W$ the affine Weyl group and $W_f$ the finite Weyl group, associated to this group. Consider the root system for $G$, given by$(\mathbb{X}=Hom(T,\mathbb{G}_m),R,\mathbb{X}^\vee =Hom(\mathbb{G}_m,T),R^\vee)$, where $T$ is a maximal torus and $R$ (resp $R^\vee$) are the set of roots (resp coroots) of the group. Then we have $W=W_f\ltimes \mathbb{Z}R^\vee$ and ${}^fW$ the minimal length elements in the cosets of $W_f\char`\\ W$. We  consider the affine roots $R\times\mathbb{Z}\delta$. The choice of $B$ determines a set of positive roots $R^+$ and the choice of Iwahori $\mathfrak{B}$ corresponding to $B$ gives the choice of positive affine roots given by $R^+\times \{0\}\amalg R\times \mathbb{Z}_{>0}\delta$. \\
Further recall that $W$ acts on $\mathbb{X}\otimes_{\mathbb{Z}} \mathbb{R}$ by affine linear transformations, generated by affine reflections on the planes $H_{\alpha,n}=\{\lambda\in \mathbb{X}\otimes_{\mathbb{Z}} \mathbb{R}|<\lambda,\alpha>=n\}$ given by $\alpha\in R$ $n\in\mathbb{Z}$. Consider the connected components $\mathbb{X}\otimes_{\mathbb{Z}} \mathbb{R}\setminus\bigcup_{\alpha,n}H_{\alpha,n}$. Recall the closure of these are called alcoves. We denote the fundamental alcove by $A_0=\{\lambda|0\leq<\lambda,\alpha>\leq 1\forall\alpha R^+\}$. With this we get a bijection between $W$ and the set of alcoves, given by $w\mapsto wA_0$. We will use this bijection and refer to $wA_0$ as the alcove corresponding to $w\in W$.\\
We now introduce a particular affine Springer fiber. Let $s\in\mathfrak{g}$ a regular semisimple element of the Lie algebra. We will consider the affine Springer fiber for the element of the loop Lie algebra $\gamma=ts$, ie $\mathcal{F}l_\gamma=\{\mathfrak{b}\in\mathcal{F}l|\gamma\in\mathfrak{b} \}$.\\
It is known that this space is equidimensional of dimension $dim(G/B)$ of the finite flag variety. Note that this space has an action of $T(\mathcal{K})$ as this comutes with $ts$, so we get an action of the coroot lattice $\mathbb{X}^\vee$ and of the torus $T$. Further there is an action of $\mathbb{G}_m$ on $\mathcal{F}l$ given by rescaling the parameter $t$ of $\mathcal{K}$. This preserves $\mathcal{F}l_\gamma$ as this acts on $ts$ by scaling and hence fixes the corresponding affine Springer fiber. Denote by $\widetilde{T}=T\times\mathbb{G}_m$. The fixed points of $T$ and of $\widetilde{T}$ are both the same and are given by $W_{ext}=W_f\ltimes \mathbb{X}^\vee$ under the map $wt^\lambda\mapsto\dot{w}t^\lambda\mathfrak{B}/\mathfrak{B}$, for $\lambda$ a coweight, with $t^\lambda$ given by the image of $t$ under $\lambda:\mathcal{K}^*\mapsto T(\mathcal{K})$, and $\dot{w}$ a lift of the finite Weyl group element.\\
The goal of this paper is to understand the fixed points on each component. Due to results from Goresky Kottwitz and MacPherson\cite{GKM2}, understanding the fixed points and 1 dimensional orbits would give a combinatorial description of the cohomology of each component.\\ 
Understanding the fixed points of a component is a problem that remains open for finite Springer fibers and is thus a special property of this affine Springer fiber that such a result can be computed.\\
In work by Fresse\cite{F}, he determines the smoothness properties for particular finite Springer fibers. To do this he studies properties of torus fixed points. In a similar way understanding the one dimensional orbits and fixed points would give an understanding of some smothness properties for these components.\\
Further the motivation for this result was a question by Le Hung on the degree of some bundles on restriction to the components. This questions comes motivated by the study of Shimura varieties and the results computed can be used in that study.\\
 The geometry and cohomology of these affine Springer fibers have also been considered before. In particular, work of Hikita\cite{H} and Kivinen\cite{K} expand on the combinatorial description given in \cite{GKM2} and relate the results to the Hilbert scheme of points on $\mathbb{C}^2$ for the case of Type A. Further the geometry of this affine Springer fiber has also been studied by Lusztig in \cite{L}.\\
 In upcoming work by Bezrukavnikov and McBreen \cite{BM} an equivalence between microlocal sheaves on this affine Springer fiber and a regular block of the category of graded representation of the small quantum group is proven. Also in upcoming work with Bezrukavnikov, Shan and Vasserot \cite{BBASV} the center of the Lusztig quantum group lying in the small quantum group is described as the invariants of the cohomology of this affine Springer fiber under the root lattice $\mathbb{Z}R$. This is conjecturally the whole center of the small quantum group in Type A. These 2 results give a strong connection betweeen the geometry of this space and the representation theory of the small quantum group. This further is related to the characteristic p representation theory of reductive algebraic groups, by work of AJS. In particular the above results could lead to a combinatorial description of the endomorphisms of projective objects in these categories.\\
 \subsection{Organization of the paper} In section 2 we recall a particular description of the components of $\mathcal{F}l_\gamma$ following \cite{GKM1}. In section 3 we describe some attracting neighborhoods of the fixed points and state the result. In appendix A we state and prove some combinatorial lemmas that we will need. In appendix B we carry out the combinatorial computations giving a proof of the results.
 \subsection{Acknowledgments} I want to thank Viet Bao Le Hung for asking the question that motivated this work, as well as for very fruitful discussions about the project. I also want to thank Roman Bezrukavnikov for a lot of conversations about the topic and for giving me great suggestions to improve all my work.
\section{Components of equivalued affine Springer fibers}
We first start by giving some description of the components. The usual description gives a bijection between $W_{ext}$ and the components given by the assignments $w\mapsto \overline{N(\mathcal{K})w\mathfrak{B}/\mathfrak{B}}$.\\ 
To compute the fixed points of components we will want a different description of the components. We do this by understanding intersection of the space with $G(\mathcal{O})$-orbits.\\
To understand the intersections of $G(\mathcal{O})$-orbits with $\mathcal{F}l_\gamma$ we follow \cite{GKM1}. We will do the computations for a simply connected group to make the notation easier. Before starting this description, recall that $G(\mathcal{O})$-orbits in $\mathcal{F}l$ can be given by $w\in {}^fW$, so from now on we will assume this condition. To continue note that by Lemma A.1 we have that ${}^w\mathfrak{B}\cap \mathfrak{B}= G(\mathcal{O})\cap {}^w\mathfrak{B}$, ie ${}^w\mathfrak{B}$ contains all finite positive roots and hence no finite negative roots.\\
Now to understand the intersection we see that $G(\mathcal{O})w\mathfrak{B}/\mathfrak{B}$ is isomorphic to $X=G(\mathcal{O})/{}^w\mathfrak{B}\cap\mathfrak{B}$. Further we can consider the intersection with $\mathcal{F}l_\gamma$ as $\{g\in G(\mathcal{O})/{}^w\mathfrak{B}\cap\mathfrak{B}|ts\in {}^{gw}\mathfrak{b}\}$.\\
We rewrite this as the subvariety $Y\subset X$ given by the vanishing of a map of vector bundles as follows:\\
Consider the map $\mathfrak{g}(\mathcal{O})\rightarrow t\mathfrak{g}(\mathcal{O})$ given by bracketing with $ts$. This induces a map of bundles
$$[ts,-]:\mathfrak{g}(\mathcal{O})/{}^{gw}\mathfrak{b}\cap\mathfrak{g}(\mathcal{O})\rightarrow t\mathfrak{g}(\mathcal{O})/{}^{gw}\mathfrak{b}\cap t\mathfrak{g}(\mathcal{O})$$
Note that this map is surjective, as $s$ is regular semisimple. Thus we get that the intersection is a smooth variety, as then the tangent space of the subvariety $Y$, which can be computed as the kernel of this map, is of constant dimension.\\
Further we can consider the following maps
$$X_r=G(\mathcal{O})/({}^w\mathfrak{B}\cap\mathfrak{B})G_r\rightarrow X_{r-1}=G(\mathcal{O})/({}^w\mathfrak{B}\cap\mathfrak{B})G_{r-1}$$
Here $G_r$ are the unipotent subgroup of $G(\mathcal{O})$ with Lie algebra $t^r\mathfrak{g}(\mathcal{O})$ for $r>0$.\\
Similarly we can consider $Y_r\subset X_r$ as given by the vanishing of the map of vector bundles
$$[ts,-]:\mathfrak{g}(\mathcal{O})/{}^{gw}\mathfrak{b}\cap\mathfrak{g}(\mathcal{O})+t^r\mathfrak{g}(\mathcal{O})\rightarrow t\mathfrak{g}(\mathcal{O})/{}^{gw}\mathfrak{b}\cap t\mathfrak{g}(\mathcal{O})+t^{r+1}\mathfrak{g}(\mathcal{O})$$
Note that acting on an element $x\in t\mathfrak{g}(\mathcal{O})$ by an element of $g_r\in G_r$ the difference $x-g_rx$ is in $t^{r+1}\mathfrak{g}(\mathcal{O})$. And further just as above if we start with a regular semisimple this difference gives a surjective map mod ${}^w\mathfrak{b}\cap\mathfrak{g}(\mathcal{O})$ and hence it follows that the map $Y_r\rightarrow Y_{r-1}$ induced from the above map is surjective.\\
Further it is just given as a torsor over the vector bundle given as the kernel of 
$$[ts,-]:t^{r}\mathfrak{g}(\mathcal{O})/{}^{gw}\mathfrak{b}\cap t^{r}\mathfrak{g}(\mathcal{O})+t^{r+1}\mathfrak{g}(\mathcal{O})\rightarrow t^{r+1}\mathfrak{g}(\mathcal{O})/{}^{gw}\mathfrak{b}\cap t^{r+1}\mathfrak{g}(\mathcal{O})+t^{r+2}\mathfrak{g}(\mathcal{O})$$
Using this we can reduce the computation of irreducibility by repeatedly mapping down along those maps to computing the irreducibility of the subvariety  $Y_1$ of $G(\mathcal{O})/({}^w\mathfrak{B}\cap\mathfrak{B})G_1\cong G/B$. To describe $Y_1$ let's introduce some notation. Let $V_w={}^w\mathfrak{b}\cap t\mathfrak{g}(\mathcal{O})/t^2\mathfrak{g}(\mathcal{O})\subset t\mathfrak{g}(\mathcal{O})/t^2\mathfrak{g}(\mathcal{O}) \cong\mathfrak{g}$. Using this, $Y_1$ is given as the subset
$$\{gB\in G/B|s\in {}^{g}V_w\}$$
These varieties are called Hessenberg varieties, which by the above arguments are closed smooth subvarieties of $G/B$ and hence it is projective. Note that this variety is non-empty if $V_w\supset \mathfrak{b}$. In the above case, as $w\in {}^fW$, it is always the case that $V_w\supset \mathfrak{b}$, as by Lemma A.1 all positive finite root subspaces are contained in ${}^w\mathfrak{b}$. Further we see that this variety is $T$ stable (as ${}^w\mathfrak{b}$ is $T$ stable) and it contains all fixed points. So from smoothness it follows that irreducibility is equivalent to connectedness and further connectedness is equivalent to all fixed points being on the same connected component. Note that if the negative simple root spaces are contained in $V_w\subset\mathfrak{g}$ the $\mathbb{P}^1$ connecting $x, xs\in W_f$ for some simple reflection $s$ is contained in $Y_1$. Further if a simple root is missing consider the parabolic $P$ given by the subroot system given by excluding this simple root. Then we have $Y_1\subset \bigcup_{x\in W_f} xP/B$ and hence the space is not connected.\\
This reduces then the condition of irreducibility to the condition that $\delta-\alpha_i$ root space for a simple root $\alpha_i$ is contained in the Lie algebra ${}^w\mathfrak{b}$. Hence we can translate this to the statement that $w^{-1}(\delta-\alpha_i)$ is a positive root. This is equivalent to $w\in F$ the fundamental box, ie these are $w\in W$ corresponding to the alcoves satisfying $0\leq<\lambda,\alpha_i>\leq 1$ for all finite simple roots $\alpha_i$ and all $\lambda$ in the alcove. It follows that we have the following Proposition
\begin{prop}
	The intersection $G(\mathcal{O})w\mathfrak{B}/\mathfrak{B}\cap\mathcal{F}l_\gamma$ is irreducible of dimension $dim(G/B)=\sharp R^+$ for $w\in F$. We denote the closure of this intersection by $Y_w$
\end{prop}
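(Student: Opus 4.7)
The plan is to fully exploit the tower construction already set up: $Y \subset X = G(\mathcal{O})/({}^w\mathfrak{B}\cap\mathfrak{B})$ is the vanishing locus of $[ts,-]$, and $Y_r \to Y_{r-1}$ is a surjection which (over each point of $Y_{r-1}$) is a torsor for the kernel of the corresponding graded map, hence a vector space. Since surjective maps with connected (in fact affine) fibers preserve irreducibility, both ways, irreducibility of $Y$ is equivalent to irreducibility of the bottom of the tower, the Hessenberg variety $Y_1 \subset G/B$. So I would focus exclusively on $Y_1$.

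For $Y_1$, the surjectivity of the bundle map $[ts,-]$ forces smoothness, and $Y_1$ is projective and $T$-stable. One checks that every $xB \in (G/B)^T$ lies in $Y_1$: indeed $V_w$ contains the Cartan $\mathfrak{h}$ (since $w \in {}^fW$ and Lemma A.1 puts the positive finite roots in ${}^w\mathfrak{b}$, and $\mathfrak{h} \subset t\mathfrak{g}(\mathcal{O})/t^2\mathfrak{g}(\mathcal{O})$ as well), so $s \in \mathfrak{h} \subset {}^x V_w$ for every $x \in W_f$. Smoothness then reduces irreducibility to connectedness, and connectedness reduces (by a standard Bialynicki-Birula / $T$-equivariant argument) to all $T$-fixed points lying in one component. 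The idea is then to show that if the negative simple root space $\mathfrak{g}_{-\alpha_i}$ is contained in $V_w$, then the $T$-stable $\mathbb{P}^1$ through $xB$ and $xs_iB$ lies entirely in $Y_1$, allowing one to walk between any two fixed points using simple reflections.

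It remains to translate the containment $\mathfrak{g}_{-\alpha_i}\cdot t \subset {}^w\mathfrak{b}$ into a condition on $w$. Using $\delta$ to denote the generator of affine translations, $-\alpha_i$ at level $1$ is the affine root $\delta - \alpha_i$, so the condition becomes $w^{-1}(\delta - \alpha_i) \in R^+_{\mathrm{aff}}$ for every simple $\alpha_i$; combined with $w \in {}^fW$, this cuts out precisely the fundamental box $F$. If instead some simple $\alpha_i$ fails the condition, letting $P \supset B$ be the parabolic omitting $\alpha_i$ one sees that $Y_1$ lies in the disjoint union $\coprod_{x \in W_f/W_P} xP/B$, so some two fixed points lie on different components and $Y_1$ is disconnected; hence $w \in F$ is both necessary and sufficient.

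Finally, for the dimension statement: since the affine Springer fiber $\mathcal{F}l_\gamma$ is equidimensional of dimension $\#R^+$ and $Y_w$ is an open subvariety of an irreducible component, $\dim Y_w = \#R^+$; alternatively, one can read the dimension directly off the tower, with $Y_1$ being open in $G/B$ and the vertical fibers contributing exactly the right amount. The main obstacle is the combinatorial identification \emph{$\mathfrak{g}_{-\alpha_i}\cdot t \subset {}^w\mathfrak{b}$ for all $i$ iff $w \in F$}, which is the whole point of the appendix calculations; everything else is a fairly mechanical application of smoothness + $T$-fixed-point connectivity.
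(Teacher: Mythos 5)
Your irreducibility argument matches the paper exactly: the tower $Y_r \to Y_{r-1}$ of affine-space torsors reduces everything to the Hessenberg variety $Y_1 \subset G/B$, smoothness comes from surjectivity of $[ts,-]$, and connectedness follows by joining $T$-fixed points with the $T$-stable $\mathbb{P}^1$'s, which works exactly when $\delta-\alpha_i$ lies in ${}^w\mathfrak{b}$ for all simple $\alpha_i$, i.e.\ when $w \in F$. This is the paper's chain of reasoning, laid out in the paragraphs preceding the Proposition.

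The dimension claim, however, has a genuine gap. Your first argument --- ``$Y_w$ is an open subvariety of an irreducible component, hence has dimension $\#R^+$ by equidimensionality'' --- is circular: the intersection $S_w = G(\mathcal{O})w\mathfrak{B}/\mathfrak{B}\cap\mathcal{F}l_\gamma$ is one locally closed stratum of a stratification of $\mathcal{F}l_\gamma$, and a stratum is dense (hence open) in a component exactly when it already has dimension $\#R^+$; nothing established so far rules out $\overline{S_w}$ being a proper closed subvariety of some component. Your fallback --- ``$Y_1$ is open in $G/B$ and the fibers contribute the rest'' --- is simply false: $Y_1$ is a Hessenberg variety, which is a proper closed subvariety of $G/B$ whenever $V_w \subsetneq \mathfrak{g}$ (the typical case for nontrivial $w\in F$). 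The paper closes this gap by a direct tangent-space computation: the tangent space at any point is the kernel of the surjective bundle map $[ts,-]$, so
\[
\dim = \dim\bigl(\mathfrak{g}(\mathcal{O})/{}^{gw}\mathfrak{b}\cap\mathfrak{g}(\mathcal{O})\bigr) - \dim\bigl(t\mathfrak{g}(\mathcal{O})/{}^{gw}\mathfrak{b}\cap t\mathfrak{g}(\mathcal{O})\bigr)
= \dim\bigl(t^{-1}{}^w\mathfrak{b}\cap\mathfrak{g}(\mathcal{O})/{}^w\mathfrak{b}\cap\mathfrak{g}(\mathcal{O})\bigr),
\]
and for each finite positive root $\alpha$ exactly one of $w^{-1}(\pm\alpha)$ is positive, so exactly $\#R^+$ root spaces contribute. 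You should replace your dimension paragraph with this (or some equivalent explicit count); the rest of the proposal is sound.
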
 
\begin{proof}
	Note from the above it follows the irreducibility, so it remains to show that these intersections are of the correct dimension. Note that by the computation of the tangent space the dimension is given by $$dim(\mathfrak{g}(\mathcal{O})/{}^{gw}\mathfrak{b}\cap\mathfrak{g}(\mathcal{O}))-dim(t\mathfrak{g}(\mathcal{O})/{}^{gw}\mathfrak{b}\cap t\mathfrak{g}(\mathcal{O}))$$
	which can be rewritten as
	$$dim(t^{-1}{}^w\mathfrak{b}\cap\mathfrak{g}(\mathcal{O})/{}^w\mathfrak{b}\cap\mathfrak{g}(\mathcal{O}))$$
	But for each of the finite positive roots $\alpha$ exactly one of $w^{-1}(\pm\alpha)$ are positive roots, thus exactly half of the finite roots contribute one dimension to $t^{-1}{}^w\mathfrak{b}\cap\mathfrak{g}(\mathcal{O})/{}^w\mathfrak{b}\cap\mathfrak{g}(\mathcal{O})$, thus the dimension is $\sharp R^+$ as required.
\end{proof}
\begin{rmk}
	Note that these are not all the components up to coroot translations, but if we consider the action of the center, we indeed can understand all the components up to translation and central action if we understand the components $Y_w$ for $w\in F$. Hence if we proof results for the above component $Y_w$ we can infer a solution for all components. In this case knowing the fixed points of $Y_w$ for $w\in F$ give the fixed points of all components.
\end{rmk}
\begin{rmk}
	Note further that this already proves the result $Y_w^T\subset \{y\leq w_0w\}$, as $\overline{G(\mathcal{O})w\mathfrak{B}/\mathfrak{B}}$ has fixed points exactly given by $\{y\leq w_0w\}$
\end{rmk}
\section{Fixed points of components}
\subsection{Neighborhoods of fixed points}
In this section we introduce a method of determining weather a fix point is in the closure of a $\widetilde{T}$-stable subset of $\mathcal{F}l$. We then proceed to check that these conditions are satisfied for the components of the affine Springer fiber $\mathcal{F}l_\gamma$ and all fix points smaller in the Bruhat order.\\
Denote by $U_-\subset G(\mathbb{C}[t^{-1}])\subset G(\mathcal{K})$, the preimage of $N_-$ the lower triangular matrices under the map $G(\mathbb{C}[t^{-1}])\rightarrow G$ given by setting $t^{-1}$ to 0.\\
Consider the subset $U_1=U_-\mathfrak{B}/\mathfrak{B}\subset\mathcal{F}l$ is a homogenous space under the action of $U_-$ and at the identity both have the same tangent space, so this gives an open neighborhood of $1$. Note that this group is stable under conjugation by $T$ and further is stable under scaling $t$, so this gives an open $\widetilde{T}$-stable neighborhood of $1$.\\
Further this doesn't contain any other fixed points as if you use a strictly positive cocharacter of $\lambda:\mathbb{G}_m\rightarrow\widetilde{T}$, this set contracts to $1$ under the action of $\mathbb{G}_m$ through $\lambda$ as we take the limit at $0$. Ie this is exactly the attracting set of $1$ for strictly positive cocharacters. If we consider the other $U_-$ orbits we get the attracting sets of all other fix points for a strictly positive cocharacter. In particular this gives all $U_-$ orbits\\
Similarly for $w\in W$ we can consider $U_w=wU_-\mathfrak{B}/\mathfrak{B}\subset\mathcal{F}l$ as a $\widetilde{T}$-stable open neighborhood of $w$ only containing one fix point, and in fact which can be described as the attracting set of $w$ for some cocharacter.\\
Now we have the following easy lemma
\begin{lem}
	A $\widetilde{T}$-stable subset $Y\subset\mathcal{F}l$ satisfies $w\in \overline{Y}$ if and only if $U_w\cap Y$ is non-empty
\end{lem}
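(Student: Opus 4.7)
The plan is to prove the two directions separately, exploiting the two key properties of $U_w$ stated just before the lemma: that $U_w$ is an open neighborhood of $w$, and that $U_w$ is the attracting set of $w$ for a strictly positive (with respect to $w$) cocharacter $\lambda:\mathbb{G}_m\to\widetilde{T}$, meaning $\lim_{\tau\to 0}\lambda(\tau)\cdot y=w$ for every $y\in U_w$.

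For the forward direction, suppose $w\in\overline{Y}$. Since $U_w$ is an open subset of $\mathcal{F}l$ containing $w$, and $w$ is a limit point of $Y$, the open set $U_w$ must meet $Y$; this immediately gives $U_w\cap Y\neq\emptyset$. This direction uses no $\widetilde{T}$-equivariance at all, only the topological fact that $U_w$ is an open neighborhood of $w$.

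For the reverse direction, pick any $y\in U_w\cap Y$. By the $\widetilde{T}$-stability of $Y$, the whole orbit $\lambda(\mathbb{G}_m)\cdot y$ sits inside $Y$. Because $U_w$ is the attracting cell of $w$ under $\lambda$, we have $\lim_{\tau\to 0}\lambda(\tau)\cdot y=w$. Thus $w$ is a limit of points in $Y$, so $w\in\overline{Y}$.

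There is no real obstacle here; the content of the lemma is entirely packaged into the construction of $U_w$ as a $\widetilde{T}$-stable open attracting cell of $w$. The only mild subtlety is ensuring that the limit makes sense in the ind-scheme $\mathcal{F}l$, but this is standard: the $\widetilde{T}$-orbit through $y$ lies in a finite-dimensional $\widetilde{T}$-stable subscheme of $\mathcal{F}l$, so the Bia{\l}ynicki-Birula-type limit $\lim_{\tau\to 0}\lambda(\tau)\cdot y$ exists in the usual sense and agrees with $w$, as was already invoked when identifying $U_w$ with the attracting set of $w$.
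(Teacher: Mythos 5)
Your proof is correct and follows exactly the same two-step strategy as the paper: the forward direction uses only that $U_w$ is an open neighborhood of $w$, and the reverse direction uses $\widetilde{T}$-stability of $Y$ together with the identification of $U_w$ as the attracting set of $w$ to take the $\mathbb{G}_m$-limit of a point in $U_w\cap Y$. The brief remark about the limit existing in the ind-scheme is a harmless extra sanity check not spelled out in the paper, but the substance is identical.
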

\begin{proof}
	\begin{itemize}[leftmargin=*,label={}]
		\item $(\Rightarrow)$: $U_w$ is an open neighborhood of $w$ so this direction is clear
		\item $(\Leftarrow)$: If $U_w\cap Y$ is non-empty then the intersection is $\widetilde{T}$ stable and in the attracting set of $w$ for some cocharacter, so the limit points of this $\mathbb{G}_m$ action are contained in $\overline{Y}$ and hence $w\in\overline{Y}$
	\end{itemize}
\end{proof}
\subsection{Fixed points of components in Type A}
In this section we state our result, which will be proven in Appendix B. The strategy of the proof is given by an explicit combinatorial computation using the ideas from the previous sections
\begin{thm}
	For $w\in F$ the $T$ fixed points in the corresponding component $Y_w$ are given by:
	$$Y_w^T=\{y\leq w_0w\}$$
\end{thm}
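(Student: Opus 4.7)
The containment $Y_w^T\subseteq \{y\leq w_0 w\}$ is already supplied by Remark 2.2, so only the reverse inclusion needs proof. By Lemma 3.1 applied to the closed $\widetilde{T}$-stable subset $Y_w$, showing $y\in Y_w$ amounts to producing a point in $U_y\cap Y_w$; since the open cell $G(\mathcal{O})w\mathfrak{B}/\mathfrak{B}\cap\mathcal{F}l_\gamma$ is dense in $Y_w$, my plan is to exhibit, for each $y\leq w_0 w$, an element $u_y\in U_-$ such that $yu_y\mathfrak{B}/\mathfrak{B}$ lies in that open cell.

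To build $u_y$ I would write it as a product of one-parameter root subgroups $x_{-\beta}(c_\beta)$ indexed by a set of positive affine roots $\beta$ determined by the pair $(y,w)$, with the coefficients $c_\beta$ to be solved for. The membership $yu_y\mathfrak{B}/\mathfrak{B}\in G(\mathcal{O})w\mathfrak{B}/\mathfrak{B}$ is an open condition cutting out the appropriate piece of the attracting cell $U_y$; its combinatorial content is that the ``missing'' directions between $y$ and $w_0 w$ are filled in by the chosen factors $x_{-\beta}$. The membership $yu_y\mathfrak{B}/\mathfrak{B}\in\mathcal{F}l_\gamma$, namely $(yu_y)^{-1}ts(yu_y)\in\mathfrak{b}$, becomes a system of linear equations in the $c_\beta$ whose coefficients are the nonzero scalars $\alpha(s)$ arising from brackets of $ts$ against root vectors; since $s$ is regular semisimple, none of these coefficients vanishes, and for a judicious choice of root set the system is triangular in the $c_\beta$ and hence solvable.

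The real work, and the expected obstacle, is the combinatorial selection of the set $\{\beta\}$ so that the above two conditions are compatible: the ``open cell'' locus and the ``contains $ts$'' locus inside the parametrization of $U_y$ must actually intersect, and this is exactly where the hypothesis $y\leq w_0 w$ together with $w\in F$ enters. A natural inductive framework is to start at the base case $y=w_0 w$, where $u_y=1$ suffices because $w_0 w$ and $w$ lie in the same $W_f$-coset and every $T$-fixed point lies in $\mathcal{F}l_\gamma$, and descend through Bruhat covers $y\lessdot y'$, adjoining one extra root factor to $u_{y'}$ at each step to produce $u_y$. The combinatorial lemmas of Appendix A are tailored to extract the required root set from the Bruhat relation, and Appendix B executes the explicit Type A verification through the corresponding matrix computation.
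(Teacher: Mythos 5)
Your reduction to showing $U_y\cap Y_w\neq\emptyset$ via Lemma 3.1 and Remark 2.2 is exactly the paper's starting point, and your base case $y=w_0w$ is correct. After that, however, you parametrize from the ``wrong'' side, and this is where your plan loses contact with what makes the problem hard.

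The paper parametrizes the \emph{open cell of the component}: it constructs explicit matrices $M^{w_F}$ in the variables $A_{ji}$ such that $M^{w_F}x\mathfrak{B}/\mathfrak{B}$ automatically lies in $\mathcal{F}l_\gamma$ and sweeps out a dense open of $Y_x$. The question of whether $U_y$ meets this open cell then becomes, after writing the relevant lattices in a basis, the non-vanishing of a single explicit determinant in the $A_{ji}$. The entire combinatorial content of the theorem---and it is substantial---lies in showing this determinant is not identically zero, which the appendix does by locating a carefully chosen monomial (the ``degree maximization algorithm'') and checking its coefficient is a nonzero function of the $s_i$, ultimately via Vandermonde-type computations. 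You instead parametrize a slice of $U_y$ by root-group coordinates $c_\beta$ and try to impose both conditions ``lands in $G(\mathcal{O})w\mathfrak{B}/\mathfrak{B}$'' and ``lands in $\mathcal{F}l_\gamma$.'' The assertion that the latter ``becomes a system of linear equations'' that ``for a judicious choice of root set\dots is triangular and hence solvable'' is precisely the step you cannot take for free: conjugating $y^{-1}tsy$ by a product of noncommuting root subgroups produces genuinely polynomial (not linear) conditions, and even if one linearizes, identifying a consistent system whose solution \emph{also} satisfies the open condition of staying in the big $G(\mathcal{O})$-orbit is equivalent to exactly the determinant analysis the paper performs. Declaring the system triangular is where the proof would actually have to happen, and you give no mechanism for it.

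The proposed induction over Bruhat covers $y\lessdot y'$ has the same problem. Adjoining one extra root factor $x_{-\beta}(c_\beta)$ to $u_{y'}$ will in general destroy the $\mathcal{F}l_\gamma$-membership achieved at the previous stage, so you cannot expect to ``freeze'' the earlier coefficients and solve for one new one; all the $c_\beta$ must be readjusted simultaneously, and you have no argument that a common solution exists. You also omit the reduction to $y\in{}^fW$ (via the observation that $\dot{x}\mathcal{F}l_\gamma=\mathcal{F}l_{{}^x\gamma}$ for $x\in W_f$ preserves $G(\mathcal{O})$-orbits and hence the labeling of components), which the paper uses essentially so that the combinatorial lemmas of Appendix A, phrased in terms of dominant weights $y(\omega_i)$, apply. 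In short: you have the right formulation of the problem, but the plan replaces the hard combinatorial core with an unsupported solvability claim.
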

\begin{proof}
	Most of the details of the proof are left to the Appendix, but here we sketch the ideas used.\\
	The first idea is giving an explicit description of the intersections described in Section 2, for Type A. This is done in Appendix B2.\\
	Similarly we give an explicit description of the open attractive neighborhoods of each fixed point described in Section 3.1.\\
	Using these description, we can reduce the condition of a particular fixed point being in the closure of the intersection previously described to an explicit determinant not vanishing.\\
	We can reduce the above result to the case $y\in {}^fW$, because multiplying $\mathcal{F}l_\gamma$ by $x\in W_f$ does not fix the affine Springer fiber, but it sends this affine Springer fiber to the one given by $t{}^xs$, which is again of the same form for a different regular semisimple element. Further as we're considering $G(\mathcal{O})$-orbits and $G(\mathcal{O})$ is $W_f$ stable. It follows that the $G(\mathcal{O})$ orbits are preserved. Thus the components $Y_w$ for $w\in W$ are sent to the corresponding components of $\mathcal{F}l_{t{}^xs}$. Hence we see if we can check $y\in W$ is in one of the above given components $Y_w$ for every regular semisimple $s$, we can also show $xy$ is in $Y_w$ for $x in W_f$.\\
	The last part of the argument is a combinatorial strategy to find a monomial in the determinant above mentioned, whose coefficient is easy to compute and can be shown to be non-zero in the case of $y\leq w_0w$ and $y\in{}^fW$. This is done by considering some "high degree" monomials in some sense. To be precise the open part of the component is given as an affine space with a particular $\widetilde{T}$ action. We will consider a variable describing the representation to have high degree if the loop rotation $\mathbb{G}_m$ acts with a high power. We then define the high degree monomials by lexicographic order on the high degree variables. The precise details of these combinatorics are left to the appendix.
\end{proof}
\appendix
\section{Combinatorial lemmas}
In this section we want to state some combinatorial lemmas. We will use these results throughout the paper.\\
\begin{lem}
	For $w\in W$, $w\in {}^fW$ if and only if $w^{-1}(\alpha_i)>0$ for the finite simple roots $\alpha_i$
\end{lem}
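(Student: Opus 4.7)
The plan is to deduce this from standard Coxeter-theoretic facts applied to the pair $(W, W_f)$, where $W_f$ is the standard parabolic subgroup of $W$ generated by the finite simple reflections $s_i$ (those corresponding to $\alpha_i$, not the affine simple reflection). The key input is the characterization of minimal-length coset representatives together with the standard sign criterion for how length changes under left multiplication by a simple reflection.

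First I would reformulate ${}^fW$: by definition $w \in {}^fW$ means $\ell(w) \leq \ell(uw)$ for every $u \in W_f$. Using the subword property in the Coxeter group $W_f$, it suffices to check this for the generators of $W_f$, namely for each finite simple reflection $s_i$. So the statement reduces to
\begin{equation*}
w \in {}^fW \iff \ell(s_i w) > \ell(w) \text{ for every finite simple reflection } s_i.
\end{equation*}
(Equality cannot occur since $\ell(s_i w)$ and $\ell(w)$ differ by exactly $1$.) This reduction is purely formal and uses only the fact that $W_f$ is a standard parabolic subgroup of the Coxeter group $W$.

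Next I would invoke the standard fact for any Coxeter system $(W,S)$: for $s \in S$ with associated simple root $\alpha$, $\ell(sw) > \ell(w)$ if and only if $w^{-1}(\alpha) > 0$. Applied to each finite simple reflection $s_i$ with simple root $\alpha_i$, this converts the length condition above into exactly $w^{-1}(\alpha_i) > 0$ for all $i$, completing the equivalence. Since the simple roots $\alpha_i$ of $W_f$ are among the simple affine roots of $W$, the sign criterion transfers without modification.

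There is no real obstacle: the proof is a direct application of two standard Coxeter lemmas. The only point worth being explicit about is that the criterion involves only the finite simple roots (and not the affine simple root $\delta - \theta$), because $W_f$ is generated only by the corresponding reflections; one should state this observation once so the reader sees why no affine-root condition appears.
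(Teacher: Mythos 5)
Your proof is correct and takes essentially the same route as the paper: reduce membership in ${}^fW$ to the length condition $\ell(s_i w) > \ell(w)$ for each finite simple reflection, then apply the standard sign criterion $\ell(s_i w) > \ell(w) \iff w^{-1}(\alpha_i) > 0$. You simply spell out the two Coxeter-theoretic lemmas that the paper compresses into one sentence.
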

\begin{proof}
	Note that $w\in {}^fW$ if and only if $s_iw>w$ for all finite simple reflections, which by the usual argument is equivalent to $w^{-1}(\alpha_i)>0$
\end{proof}
From now on we will assume we're in Type A. Denote by $\omega_i=(1,1,1...1,0,0...0)$ the ith fundamental weight. The fundamental alcove $A_0$ is given in type A by the simplex with vertices given by $\omega_i$. With this we have the following 
\begin{lem}
	If $w\in {}^fW$, then $w\in {}^fW$ if and only if $w(\omega_i)\in \mathbb{X}^{++}\forall i$ and $w(0)\in\mathbb{X}^{++}$. Note that this is equivalent to $wA_0$ being in the dominant chamber.
\end{lem}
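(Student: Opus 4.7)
The statement is really a characterization of the coset representatives ${}^fW$ by the geometric position of the alcove $wA_0$: the vertex condition ``$w(0),w(\omega_i)\in\mathbb{X}^{++}$'' is just the assertion that every vertex of $wA_0$ lies in the dominant chamber, and since the dominant chamber is convex and $A_0$ is the convex hull of $0,\omega_1,\ldots,\omega_{n-1}$, that vertex condition is equivalent to $wA_0\subset\mathbb{X}^{++}\otimes\mathbb{R}$. So the real content is the equivalence
\[ w\in{}^fW\ \Longleftrightarrow\ wA_0\subset\{\mu\ :\ \langle\mu,\alpha_i\rangle\ge 0\ \forall i\}. \]
My plan is to reduce this to Lemma A.1 via the duality between the affine action of $W$ on $\mathbb{X}\otimes\mathbb{R}$ and its action on affine roots.

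First I would spell out the vertex reduction: the fundamental alcove in type $A_{n-1}$ is $A_0=\operatorname{conv}(0,\omega_1,\ldots,\omega_{n-1})$, so $wA_0=\operatorname{conv}(w(0),w(\omega_1),\ldots,w(\omega_{n-1}))$ by affineness of the $W$-action. Since the dominant chamber is a closed convex cone, $wA_0$ is contained in it iff each vertex is. This disposes of the equivalence between the two formulations of the geometric condition.

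Next I would match the geometric condition with the root-theoretic one from Lemma A.1. For any affine root $\beta=\alpha+n\delta$ and any $\lambda\in\mathbb{X}\otimes\mathbb{R}$, the pairing $\langle\lambda,\beta\rangle:=\langle\lambda,\alpha\rangle+n$ is non-negative on all of $A_0$ iff $\beta$ is a positive affine root; this is just the definition of the positive system $R^+\times\{0\}\amalg R\times\mathbb{Z}_{>0}\delta$. Combined with the standard identity $\langle w\lambda,\alpha_i\rangle=\langle\lambda,w^{-1}\alpha_i\rangle$ (valid for the affine action on weights and the dual action on affine roots), this yields
\[ wA_0\subset\{\langle\cdot,\alpha_i\rangle\ge 0\}\ \forall i\ \Longleftrightarrow\ w^{-1}(\alpha_i)>0\ \forall i. \]
By Lemma A.1 the right-hand side is exactly $w\in{}^fW$, which finishes the proof.

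The only subtle point (hence the only place a careful reader might stumble) is verifying the duality $\langle w\lambda,\alpha_i\rangle=\langle\lambda,w^{-1}\alpha_i\rangle$ once the translation part of $w\in W=W_f\ltimes\mathbb{Z}R^\vee$ is present, because then $w\lambda$ involves an affine translation while $w^{-1}\alpha_i$ involves the integer component $n\delta$; these two effects must cancel, and it is precisely the normalization $\langle\lambda,\delta\rangle=1$ (i.e.\ that affine roots pair with the level-one slice containing $A_0$) that makes the cancellation work. Once that identity is in hand the rest of the argument is a one-line combination of Lemma A.1 and the convexity remark above.
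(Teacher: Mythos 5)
Your proof is correct, and it takes a genuinely cleaner and more geometric route than the paper's argument, though both rest on Lemma A.1. Where the paper expands $w=\bar wt^\mu$, writes out $w^{-1}(\alpha_i)=\bar w^{-1}(\alpha_i)+\langle\mu,\bar w^{-1}(\alpha_i)\rangle\delta$ and $\langle w(\omega_i),\alpha_j\rangle=\langle\mu+\omega_i,\bar w^{-1}(\alpha_j)\rangle$ explicitly, and then performs a case-check using the minuscule bound $\langle\omega_i,\alpha\rangle\ge -1$ in type $A$, you package the same computation as two general facts: (i) the level-one pairing $\langle\lambda,\alpha+n\delta\rangle=\langle\lambda,\alpha\rangle+n$ is nonnegative on all of $A_0$ exactly when $\alpha+n\delta$ is a positive affine root, and (ii) the duality $\langle w\lambda,\alpha_i\rangle=\langle\lambda,w^{-1}\alpha_i\rangle$ between the affine action on $\mathbb{X}\otimes\mathbb{R}$ and the linear action on affine roots. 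That buys you a proof where the only type-$A$-specific input is the vertex list $A_0=\mathrm{conv}(0,\omega_1,\dots,\omega_{n-1})$; replacing that list with $\omega_i/\langle\omega_i,\alpha_0^\vee\rangle$ instantly gives the general-type statement that the paper only mentions in the remark following the lemma. Two small points worth flagging: the claim that $\langle\cdot,\beta\rangle\ge 0$ on $A_0$ iff $\beta>0$ is not \emph{literally} ``just the definition'' of the positive system — the forward direction still uses $-1\le\langle\lambda,\alpha\rangle\le 1$ on $A_0$ and the reverse direction needs the interior of $A_0$ to get a strictly negative value — but both are immediate from $A_0=\{\lambda:0\le\langle\lambda,\alpha\rangle\le 1\ \forall\alpha\in R^+\}$. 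And your ``subtle point'' about the duality with a translation part present is indeed the crux; writing out $w\lambda=\bar w(\lambda+\mu)$ and $w^{-1}\alpha_i=\bar w^{-1}\alpha_i+\langle\mu,\bar w^{-1}\alpha_i\rangle\delta$ confirms the cancellation and is exactly the content of the paper's two displayed identities, so you lose nothing by leaving it as a remark.
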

\begin{proof}
	For $w\in W$, the condition that it is minimal in its $W_f$ orbit is equivalent to $s_iw\geq w$ for all simple reflections $s_i$, which again is equivalent to $w^{-1}(\alpha_i)$ being a positive root. Write $w=\bar{w}t^\mu$ for $\bar{w}\in W_f$ and $\mu\in \mathbb{Z}R$, then $w^{-1}(\alpha_i)=\bar{w}^{-1}(\alpha_i)+\delta<\mu,\bar{w}^{-1}(\alpha_i))>$, so this is positive if $<\mu,\bar{w}^{-1}(\alpha_i)>\geq0$ and if $<\mu,\bar{w}^{-1}(\alpha_i)>=0$ and $w^{-1}(\alpha_i)$ a positive root.\\
	Now $w(\omega_i)\in \mathbb{X}^{++}$ is equivalent to $0\leq<w(\omega_i),\alpha_j>=<\mu+\omega_i,\bar{w}^{-1}(\alpha_j)>$ for all finite simple roots $\alpha_j$. Now $<\omega_i,\alpha>\geq -1$ $\forall \alpha\in R$ for type A, so we get this is indeed equivalent to the above conditions.\\
\end{proof}
\begin{rmk}
	Note in the above lemma we can change it to work for any type if we use $\frac{1}{\omega_i(\alpha_0)}\omega_i\in\mathbb{X}\otimes_\mathbb{Z}\mathbb{Q}$, where $\alpha_0$ is the largest root. These are exactly the vertices of the fundamental alcove in the other types.
\end{rmk}
Using this characterisation, we will find a different description of the Bruhat order for elements of ${}^fW$. First we introduce an order on $\mathbb{X}^{++}$, which we will also denote by $\leq$, given by $\lambda\leq\mu$ if and only if $\mu-\lambda\in \mathbb{N}R^+$
\begin{lem}
	If $w$, $w'\in{}^fW$, then $w\leq w'$ if and only if $w(\omega_i)\leq w'(\omega_i)$ $\forall i$ and $w(0)\leq w'(0)$
\end{lem}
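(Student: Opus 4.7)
The plan is to prove both implications by induction on $\ell(w') - \ell(w)$, reducing each direction to a single Bruhat cover within ${}^fW$. Any chain realising $w \leq w'$ in the Bruhat order on $W$ can be refined, by the parabolic lifting property, to pass through ${}^fW$, so it suffices to handle a cover $w' = ws_\beta$ with $\beta$ a positive affine root and $w^{-1}\beta > 0$. Writing $\beta = \alpha + n\delta$ for $\alpha \in R$ and $n \in \mathbb{Z}$, the affine reflection formula applied to the linear part $\bar{w}$ of $w$ gives
\[ w'(\omega_i) - w(\omega_i) = -\bigl(\langle \omega_i, \alpha^\vee\rangle - n\bigr)\,\bar{w}(\alpha). \]
For the forward direction, a case analysis on the signs of $\alpha$, $n$, and $\bar{w}(\alpha)$ — constraining $\bar{w}$ via Lemma A.2 (since $wA_0$ lies in the dominant chamber) and using the Type A fact that $\langle \omega_i, \alpha^\vee\rangle \in \{-1,0,1\}$ — shows the right-hand side lies in $\mathbb{N}R^+$. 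The condition $w(0) \leq w'(0)$ follows by specialising to the translation part alone.

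For the backward direction, I would induct on the total defect
\[ N(w,w') := \bigl\langle w'(0) - w(0),\, \rho^\vee\bigr\rangle + \sum_i \bigl\langle w'(\omega_i) - w(\omega_i),\, \rho^\vee\bigr\rangle, \]
a nonnegative integer under the hypothesis. If $N(w,w') = 0$, all vertices of $wA_0$ and $w'A_0$ coincide and $w = w'$. Otherwise, the plan is to find a reflection $s$ of $W$ whose fixed hyperplane is a wall of the alcove $wA_0$, such that $w'' := ws$ satisfies $w'' > w$, $w'' \in {}^fW$, and $w''(\omega_j) \leq w'(\omega_j)$ for all $j$ (including $\omega_0 = 0$). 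Induction then yields $w'' \leq w'$ and hence $w \leq w'$.

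The main obstacle is this inductive step. The natural candidate for $s$ is the reflection across the wall of $wA_0$ opposite a vertex $w(\omega_i)$ that is strictly dominated by $w'(\omega_i)$. One must verify three things simultaneously: the reflected alcove remains in the dominant chamber (so $w'' \in {}^fW$); the Bruhat length strictly increases; and no previously dominated vertex is pushed past its counterpart in $w'A_0$. Each of these reduces to a nonnegativity check in the root lattice, made tractable by the Type A identity $\langle \omega_i, \alpha^\vee\rangle \in \{-1, 0, 1\}$ for any root $\alpha$. This combinatorial verification, together with the careful choice of wall, is the genuine content of the lemma.
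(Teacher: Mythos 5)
Your forward-direction argument matches the paper's: both reduce $w \leq w'$ to a chain of covers, identify each cover with a reflection whose hyperplane separates the two alcoves, and read off that every vertex moves by a nonnegative multiple of a positive root. The paper phrases this via alcove walks and you phrase it via the explicit affine reflection formula plus a Type~A case analysis, but the content is the same, and your reduction to a cover inside ${}^fW$ via the parabolic lifting property is a correct refinement that the paper leaves implicit.

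The converse, however, is where the real work lies, and here your proposal has a genuine gap. The paper's own treatment of this direction is extremely terse (``the Bruhat order is generated by $wr\leq w$\dots so the result follows''), effectively deferring the problem of producing a chain from the vertex inequalities. Your remedy---induct on the total defect $N(w,w')$ and at each step reflect $wA_0$ across the wall opposite a strictly dominated vertex---is a more explicit and plausible strategy, but it is exactly the verification you flag at the end that constitutes the proof, and you do not carry it out. In particular, it is not established that the ``natural'' wall choice simultaneously (i) keeps the reflected alcove dominant, (ii) strictly increases Bruhat length, and (iii) does not push any other vertex past its counterpart in $w'A_0$; condition (iii) is the delicate one, and when the opposite-wall reflection does overshoot, one would need to argue that some other wall of $wA_0$ works instead, which requires a genuine case analysis you have not supplied. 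Until that inductive step is made concrete, the half of the lemma that actually requires an argument remains unproven.
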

\begin{proof}
	We first describe the Bruhat order in terms of the alcoves. First we introduce the concept of alcove walks.
	\begin{definition}
		An alcove walk is a sequence of alcoves $(A_1,...A_n)$ such that $A_i$ and $A_{i+1}$ share a codimension one wall. 
	\end{definition}
	With this definition the length of $w\in W$ corresponds to the length of the shortest alcove walk starting at $A_0$ and ending at $wA_0$. This is because the condition that two alcoves $w_1A_0$ and $w_2A_0$ share a codimension 1 wall is equivalent to $w_1=w_2s$ for some simple reflection $s$.\\
	It follows from this description that for a reflection $r$, $wr\leq w$ in the Bruhat order if and only if the reflection hyperplanes lies between $A_0$ and $wA_0$. It is then clear that the difference between the vertices of $wA_0$ and $wrA_0$ is given by a positive scalar of a positive root $\alpha$, corresponding to the reflection hyperplane.\\
	The Bruhat order is generated by the inequality $wr\leq w$ for reflections, so the result follows
\end{proof}
We now introduce a different characterisation of the order on $\mathbb{X}^{++}$ introduced above
\begin{lem}
	If $\lambda$, $\mu\in\mathbb{X}^{++}$ $\lambda\leq\mu$ if and only if $\sum_{j=r}^{\lambda_1}\sharp\{\lambda_i\geq j\}\leq \sum_{j=r}^{\mu_1}\sharp\{\mu_i\geq j\}$ $\forall r$ $(\dagger)$
\end{lem}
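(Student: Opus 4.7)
The plan is to pass to the conjugate partition. Writing $\lambda'_j=\sharp\{i:\lambda_i\geq j\}$, the identity $\sum_{j=r}^{\lambda_1}\sharp\{\lambda_i\geq j\}=\sum_{j\geq r}\lambda'_j$ shows that $(\dagger)$ is precisely
$$\sum_{j\geq r}\lambda'_j\leq\sum_{j\geq r}\mu'_j\qquad\text{for all }r\geq 1.$$
Since $\mu-\lambda\in\mathbb{N}R^+$ forces $|\mu|=|\lambda|$ (positive roots are traceless in type A) while the $r=1$ case of $(\dagger)$ gives $|\lambda|\leq|\mu|$, I work throughout under $|\lambda|=|\mu|$; this is automatic in the applications of the lemma, where $\lambda$ and $\mu$ lie in a common $W_f$-orbit. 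Complementing in the total $|\lambda'|=|\mu'|$, the tail-sum inequality $(\dagger)$ becomes equivalent to $\sum_{c<r}\lambda'_c\geq\sum_{c<r}\mu'_c$ for every $r$, that is, $\mu'\leq\lambda'$ in the classical dominance order on partitions. The standard fact that conjugation reverses dominance identifies this with $\lambda\leq\mu$ in dominance on $\lambda,\mu$ themselves, so the problem reduces to matching the dominance order on partitions of a fixed size with the root order $\leq$ on $\mathbb{X}^{++}$.

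For the direction $(\Rightarrow)$ this is a direct computation: writing $\mu-\lambda=\sum_{i<j}c_{ij}(e_i-e_j)$, the partial sums evaluate to $\sum_{k\leq r}(\mu_k-\lambda_k)=\sum_{i\leq r<j}c_{ij}\geq 0$, giving dominance.

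The direction $(\Leftarrow)$ is the main obstacle: given $\lambda<\mu$ in dominance, I must exhibit a chain of dominant weights $\lambda=\nu^{(0)},\ldots,\nu^{(N)}=\mu$ with each successive difference a positive root. The construction is to take $i$ minimal with $\lambda_i<\mu_i$ and $j$ maximal with $\lambda_j>\mu_j$; dominance forces $\lambda_k=\mu_k$ for $k<i$ and $k>j$, and the inequalities $\lambda_{i-1}=\mu_{i-1}\geq\mu_i>\lambda_i$ and $\lambda_{j+1}=\mu_{j+1}\leq\mu_j<\lambda_j$ make $\nu:=\lambda+(e_i-e_j)$ again a dominant partition. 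The delicate step is to verify that $\nu\leq\mu$ remains true in dominance, equivalently that $\sum_{k\leq r}(\mu_k-\lambda_k)\geq 1$ for every $i\leq r<j$. If equality occurs at some intermediate $r$ (so that moving across the whole block $[i,j]$ would overshoot), I shrink the move by replacing $j$ with the smallest such $r$; a short partial-sum argument using dominance then shows $\lambda_r>\mu_r$ and $\lambda_r>\lambda_{r+1}$, so that $\lambda+(e_i-e_r)$ is again a valid dominant step lying below $\mu$. Iterating this elementary move produces the required chain and exhibits $\mu-\lambda$ as a non-negative integer combination of positive roots.
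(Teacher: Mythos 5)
Your proof is correct, but it takes a genuinely different route from the paper's. The paper first reformulates the root order via finite partial sums $(\star)$, then proves the equivalence $(\star)\Leftrightarrow(\dagger)$ directly — $(\Leftarrow)$ by induction on $k$ with a case split on $\lambda_k\lessgtr\mu_k$, $(\Rightarrow)$ by choosing $k$ maximal with $\lambda_k\geq r$. You instead pass to the conjugate partitions $\lambda',\mu'$, recognize $(\dagger)$ as the statement $\mu'\leq\lambda'$ in classical dominance, invoke the fact that conjugation reverses dominance, and then match dominance with $\leq$ on $\mathbb{X}^{++}$. Both are valid; the paper's argument is self-contained and elementary, while yours leverages a standard piece of partition combinatorics and is conceptually cleaner once that fact is in hand.

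Two further comments. First, you have over-engineered the final reduction: once you have $\sum_{k\le r}\lambda_k\le\sum_{k\le r}\mu_k$ for all $r$ together with $|\lambda|=|\mu|$, the conclusion $\mu-\lambda\in\mathbb{N}R^+$ is immediate, since in the simple-root expansion $\mu-\lambda=\sum_k c_k(e_k-e_{k+1})$ the coefficient is exactly $c_k=\sum_{i\le k}(\mu_i-\lambda_i)\geq 0$. The chain of dominant intermediate weights you construct (with the ``shrink the move'' patch) is a true and well-known statement, but it proves strictly more than $\mathbb{N}R^+$-membership requires, which does not demand that the staircase stay inside $\mathbb{X}^{++}$. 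Second, you are right to flag the size constraint: $(\dagger)$ at $r=1$ only gives $|\lambda|\le|\mu|$, whereas $\mu-\lambda\in\mathbb{N}R^+$ forces equality; the paper's $(\star)$ has the same implicit gap and is being applied to $\lambda,\mu$ in a common $W_f$-orbit, where $|\lambda|=|\mu|$ holds automatically. Making that hypothesis explicit, as you do, is a genuine improvement over the statement as written.
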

\begin{proof}
	First note that $\lambda\leq\mu$ if and only if $\sum_{i=1}^{k}\lambda_i\leq\sum_{i=0}^{k}\mu_i$ $\forall k$ $(\star)$.\\
	Further for $\lambda\in\mathbb{X}^{++}$  $\sum_{j=r}^{\lambda_1}\sharp\{\lambda_i\geq j\}=\sum_{\lambda_i\geq r}\lambda_i-r$ \\
	We proof the equivalence of these 2 sets of inequalities.
	\begin{itemize}[leftmargin=*,label={}]
		\item $(\Leftarrow)$: We will proof $\sum_{i=1}^{k}\lambda_i\leq\sum_{i=1}^{k}\mu_i$ $\forall k$ by induction on $k$. Clearly for $k=0$, both sums are empty so the result is obvious. Assuming the result for $k'< k$ and we proof the inequality for $k$. First if $\lambda_k\leq\mu_k$, then the inequality $(\star)$ for $k$, follows from the same inequality for $k-1$. So we can assume $\lambda_k\geq\mu_k$. Use the given inequality $(\dagger)$ for $r=\mu_k$ to get 
		$$\sum_{i=1}^{k}\lambda_i-\mu_k\leq\sum_{\lambda_i\geq\mu_k}\lambda_i-\mu_k\leq\sum_{\mu_i\geq\mu_k}\mu_i-\mu_k=\sum_{i=1}^{k}\mu_i-\mu_k$$
		Hence the result follows. Here we use that $\lambda_i\leq\lambda_j$ if $i\geq j$ and similarly for $\mu$
		\item $(\Rightarrow)$: To prove $(\dagger)$ for some $r$, let $k$ be the maximal $i$ such that $\lambda_i\geq r$. Then using $(\star)$ for this $k$, we get
		$$\sum_{\lambda_i\geq r}\lambda_i-r=\sum_{i=1}^{k}\lambda_i-r\leq\sum_{i=1}^{k}\mu_i-r\leq\sum_{\mu_i\geq r}\mu_i-r$$
		The result follows
	\end{itemize}
\end{proof}
\section{Fix points and closures}
\subsection{Neighborhoods of fixed points in type A}
Now we look into what the open subsets of Section 3.1 look like in type A. We denote by $<v_1,...v_n>_{t^{-1}}$ the $\mathbb{C}[t^{-1}]$-submodule of $\mathcal{K}^n$, spaned by $v_1$, $v_2$,...$v_n$\\
\begin{lem}
	In type A, with the usual description of $\mathcal{F}l$ we have
	$$U_1=\{tV_n\subset V_1\subset ...V_{n-1}\subset V_n|V_i\cap <t^{-1}e_1,...t^{-1}e_i,e_{i+1},...e_n>_{t^{-1}}=\{0\}\}$$
\end{lem}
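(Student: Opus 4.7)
The plan is to prove the two inclusions separately, both exploiting the direct sum decomposition $\mathcal{K}^n = L_i^{\mathrm{std}}\oplus M_i$ of $\mathbb{C}$-vector spaces, where $L_i^{\mathrm{std}}=<e_1,\ldots,e_i,te_{i+1},\ldots,te_n>_{\mathcal{O}}$ is the standard flag representing $\mathfrak{B}/\mathfrak{B}$ and $M_i=<t^{-1}e_1,\ldots,t^{-1}e_i,e_{i+1},\ldots,e_n>_{t^{-1}}$. This decomposition holds coordinate by coordinate from $\mathcal{K}=\mathbb{C}[[t]]\oplus t^{-1}\mathbb{C}[t^{-1}]$ (for $j\leq i$) and $\mathcal{K}=t\mathbb{C}[[t]]\oplus\mathbb{C}[t^{-1}]$ (for $j>i$), so in particular the standard flag itself satisfies $L_i^{\mathrm{std}}\cap M_i=\{0\}$.

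For the forward inclusion $U_1\subseteq\{V_\bullet : V_i\cap M_i=\{0\}\}$, take $u\in U_-$ and consider the $\mathbb{C}$-linear projection $\pi\colon\mathcal{K}^n\to L_i^{\mathrm{std}}$ along $M_i$. Order the natural $\mathbb{C}$-basis of $L_i^{\mathrm{std}}$ (the monomials $t^a e_j$ with $a\geq 0$ for $j\leq i$ and $a\geq 1$ for $j>i$) lexicographically. A direct computation shows that, because $u=1+X$ with $X$ having entries in $\mathbb{C}[t^{-1}]$ whose $t^{-1}\mapsto 0$ part is strictly lower triangular, the composition $\pi\circ u|_{L_i^{\mathrm{std}}}$ is unitriangular in this ordering, hence injective. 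Therefore $v\in L_i^{\mathrm{std}}$ with $uv\in M_i$ forces $v=0$, proving $uL_i^{\mathrm{std}}\cap M_i=\{0\}$.

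For the reverse inclusion, given $V_\bullet$ satisfying the transversality conditions, construct the unique $u\in U_-$ with $uL_\bullet^{\mathrm{std}}=V_\bullet$ by a Gaussian elimination procedure: the condition $V_n\cap M_n=\{0\}$ determines an $\mathcal{O}$-basis of $V_n$ whose entries in the standard basis lie in $\mathbb{C}[t^{-1}]$ (by projecting along $M_n$ to read off the negative-power-of-$t$ parts), and the refinement conditions $V_i\cap M_i=\{0\}$ together with the chain inclusions $V_1\subset\cdots\subset V_n$ and $tV_n\subset V_1$ further pin down this basis so that the resulting change-of-basis matrix $u$ has unipotent lower triangular $t^{-1}\mapsto 0$ part, i.e., $u\in U_-$.

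The main obstacle will be verifying that the construction in the reverse direction actually produces an element of $U_-$ with the correct pole and leading-term structure, rather than a matrix in some larger group. A possible shortcut uses Bialynicki--Birula theory: both sides are $\widetilde{T}$-stable open subsets of $\mathcal{F}l$, and the only $\widetilde{T}$-fixed point in either one is the identity flag (any monomial flag $wL_\bullet^{\mathrm{std}}$ for $w\in W_{ext}\setminus\{1\}$ visibly fails the transversality at some $i$), so both coincide with the attracting cell of the identity for a strictly positive cocharacter; making this route rigorous still requires the direct argument for openness and for the explicit identification of $U_1$.
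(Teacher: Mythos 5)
Your forward inclusion is correct, though more laborious than needed. Once you have the direct-sum decomposition $\mathcal{K}^n=L_i^{\mathrm{std}}\oplus M_i$ and the observation that $M_i$ is $U_-$-stable (which your unitriangularity computation essentially re-proves in disguise), the forward inclusion is immediate: for $u\in U_-$ one has $u^{-1}M_i=M_i$, hence $uL_i^{\mathrm{std}}\cap M_i=u(L_i^{\mathrm{std}}\cap u^{-1}M_i)=u(L_i^{\mathrm{std}}\cap M_i)=\{0\}$. No ordering of an infinite monomial basis or unitriangularity claim is required; this is the route the paper takes.

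The reverse inclusion is a genuine gap. You leave the Gaussian-elimination construction unfinished, correctly flagging the main difficulty (landing in $U_-$ rather than in some larger group), and the Bialynicki--Birula shortcut you offer does not hold as stated: a $\widetilde{T}$-stable open set whose only $\widetilde{T}$-fixed point is the identity need \emph{not} be the attracting cell of the identity. Already on $\mathbb{P}^2$ with the $\mathbb{G}_m$-action $\lambda\cdot[x:y:z]=[x:\lambda y:\lambda^2z]$, the open set $\mathbb{P}^2\setminus\{[1:0:0],[0:1:0]\}$ is $\mathbb{G}_m$-stable with unique fixed point $[0:0:1]$, yet it strictly contains the attracting cell $\{z\neq 0\}$ of $[0:0:1]$. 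What the paper uses instead, and what your argument is missing, is the combination of two facts already available from Section 3.1: the $U_-$-orbits partition $\mathcal{F}l$, each containing exactly one $\widetilde{T}$-fixed point; and the $U_-$-stability of the modules $M_i$ makes the right-hand side a \emph{union} of $U_-$-orbits. Since the only $\widetilde{T}$-fixed point (equivalently, monomial flag) satisfying the transversality conditions is the identity flag, the right-hand side is forced to be the single $U_-$-orbit through the identity, namely $U_1$. You have the key ingredient ($U_-$-stability of $M_i$); you need to use it to deduce that the right-hand side is $U_-$-stable and then invoke the orbit decomposition, not a uniqueness claim about $\widetilde{T}$-stable opens.
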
 
\begin{proof}
	Note that clearly the RHS contains 1. Further $<t^{-1}e_1,...t^{-1}e_i,e_{i+1},...e_n>_{t^{-1}}$ is $U_-$ stable, so $U_1\subset\{V_0\subset V_1\subset ...V_n\subset t^{-1}V_0|V_i\cap <t^{-1}e_1,...t^{-1}e_i,e_{i+1},...e_n>_{t^{-1}}=\{0\}\}$. Further any other fixed point does not satisfy the conditions, so as all $U_-$ orbits contain some fix point, we get the equality
\end{proof}
Hence we get from the lemma that the condition to check is intersections with $<t^{-1}e_1,...t^{-1}e_i,e_{i+1},...e_n>_{t^{-1}}$
\subsection{Computing open parts of components in type A}
Here we want to give an explicit description of the intersection $\mathcal{F}l\cap G(\mathcal{O})w\mathfrak{B}/\mathfrak{B}$, or more precisely we will compute the intersection with an open Schubert cell of the $G(\mathcal{O})$-orbit. We will actually use $\mathfrak{B}_-$-orbits, ie the Iwahori corresponding to the lower triangular matrices. This will change some of the computations by a multiplication by $w_0$, the longest element of $W_f$.\\
We will introduce some notation needed for this. For $w\in F$ and $1\leq i<j\leq n$ we introduce the integers $a_{ji} ^w$ as the smallest integer $k$ such that $w^{-1}(e_i-e_j)+k\delta$ is positive.\\
We then construct some lower triangular matrices $M^w$ that will describe the intersection, depending on $\sharp R^+$ variables denoted by $A_{ji}$ for $1\leq i<j\leq n$. We define the $ji$th entry as follows
$$M_{ji}^w=\sum_{i=i_1<i_2<...<i_k=j}c_{i_1i_2...i_k}\prod_{l=1}^{k-1} t^{a^w_{i_{l+1}i_l}}A_{i_{l+1}i_l}$$
Here the $c_{i_1i_2...i_k}$ are some constants depending on the regular semisimple $s$ given as $s=diag(s_i)$, the diagonal matrix with entries $s_i$. They are defined as follows
$$c_{i_1i_2...i_k}=\prod_{l=1}^{k-1}\frac{s_{i_l}-s_{i_{l+1}}}{s_{i_l}-s_{i_{k}}}$$
Here we interpret the case $k=1$ as an empty product and so the constant is $1$\\
We check that $M^w w\mathfrak{B}/\mathfrak{B}\subset \mathcal{F}l_\gamma$ and further it is of the correct dimension, so describes an open subset of the component we are looking at.\\
We do this in the following lemma
\begin{lem}
	The inverse of $M^w$ is given by the following
	$$(M^w)^{-1}_{ji}=\sum_{i=i_1<i_2<...<i_k=j}c'_{i_1i_2...i_k}\prod_{l=1}^{k-1} t^{a^w_{i_{l+1}i_l}}A_{i_{l+1}i_l}$$
	Here the constants $c'_{i_1i_2...i_k}$ are given by 
	$$c'_{i_1i_2...i_k}=(-1)^{k-1}\prod_{l=1}^{k-1}\frac{s_{i_l}-s_{i_{l+1}}}{s_{1}-s_{i_{l+1}}}$$
	Further this matrix satisfies $(M^w)^{-1}tsM^w\in {}^w\mathfrak{b}$
\end{lem}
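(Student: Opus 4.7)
The matrices $M^w$ and the proposed $(M^w)^{-1}$ are both lower triangular and unipotent---their diagonal entries come from singleton paths and equal $1$---so invertibility is automatic and it will suffice to verify the product formula and the conjugation property entry by entry. My plan is to cast each claim as a combinatorial identity in the constants $c$ and $c'$, and then recognize these as classical partial-fraction identities in the eigenvalues $s_{i_1},\dots,s_{i_m}$ of the regular semisimple element.

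For the inverse formula, expand the $(j,i)$-entry of $M^w\cdot (M^w)^{-1}$ as a double sum: the middle index $k$ of the product partitions a path $i=i_1<\dots<i_m=j$ at some vertex $i_r=k$, and the resulting contribution is $c_{i_1\dots i_r}\,c'_{i_r\dots i_m}$ times the monomial $\prod_{l=1}^{m-1} t^{a^w_{i_{l+1}i_l}}A_{i_{l+1}i_l}$. Distinct increasing paths produce distinct monomials in the $A$-variables, so vanishing of the $(j,i)$-entry for $j>i$ reduces to the scalar identity
$$\sum_{r=1}^{m} c_{i_1\dots i_r}\,c'_{i_r\dots i_m}=0 \qquad \text{for all } m\geq 2.$$
After clearing denominators, this becomes a rational identity in $s_{i_1},\dots,s_{i_m}$ that I would prove either by induction on $m$ (peeling off the $r=m$ term) or by interpreting the left-hand side as a sum of residues of an auxiliary meromorphic function whose residue sum is known to vanish.

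For the conjugation property, since $ts=t\cdot \operatorname{diag}(s_k)$ I get
$$\bigl((M^w)^{-1}\,ts\,M^w\bigr)_{ji}=t\sum_{k} s_{k}\,(M^w)^{-1}_{jk}\,M^w_{ki}.$$
Grouping the right-hand side by the composite path $i=i_1<\dots<i_m=j$ with $k=i_r$, the $(j,i)$-entry becomes $t\prod_{l=1}^{m-1} t^{a^w_{i_{l+1}i_l}}A_{i_{l+1}i_l}$ times $\sum_{r=1}^{m} s_{i_r}\,c_{i_1\dots i_r}\,c'_{i_r\dots i_m}$. The key arithmetic input here is the Lagrange-type identity
$$\sum_{r=1}^{m}\frac{s_{i_r}}{\prod_{l\neq r}(s_{i_r}-s_{i_l})}=\begin{cases} 1 & m=2,\\ 0 & m\geq 3,\end{cases}$$
which after sign-tracking yields $s_j-s_i$ for $m=2$ and $0$ for $m\geq 3$. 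Consequently
$$(M^w)^{-1}\,ts\,M^w=t\cdot\operatorname{diag}(s_1,\dots,s_n)+\sum_{i<j}(s_j-s_i)\,t^{a^w_{ji}+1}A_{ji}\,E_{ji}.$$
The diagonal part has $t$-valuation at least $1$, and the off-diagonal $(j,i)$-entry ($j>i$) has $t$-valuation $a^w_{ji}+1\geq a^w_{ji}$; both are exactly what is required for membership in ${}^w\mathfrak{b}$, by the definition of $a^w_{ji}$ as the smallest valuation appearing in the corresponding root subspace.

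The principal obstacle is the two partial-fraction identities. They are classical in flavor but require careful sign-tracking because of the asymmetry between $i_1$ (appearing in the denominators of $c'$) and $i_m$ (appearing in the denominators of $c$); once they are established, everything else reduces to routine bookkeeping of monomials in the $A_{ji}$.
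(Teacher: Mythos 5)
Your proposal is correct, and the overall reduction is the same as the paper's: expand the $(j,i)$-entry of $M^w(M^w)^{-1}$ (resp.\ $(M^w)^{-1}sM^w$) over increasing paths $i=i_1<\dots<i_m=j$, observe that distinct paths give distinct monomials in the $A$-variables, and reduce to the two scalar identities $\sum_r c_{i_1\dots i_r}c'_{i_r\dots i_m}=0$ for $m\geq 2$ and $\sum_r s_{i_r}c_{i_1\dots i_r}c'_{i_r\dots i_m}=0$ for $m\geq 3$. Where you diverge from the paper is in how you prove these identities. The paper clears denominators, shows the resulting polynomial in the $s_{i_l}$ is divisible by the full Vandermonde $\prod_{m<n}(s_{i_m}-s_{i_n})$ by examining what happens as $s_{i_m}\to s_{i_n}$, and then kills it by a degree count: a polynomial of degree $\binom{k-1}{2}$ (resp.\ $\binom{k-1}{2}+1$) divisible by one of degree $\binom{k}{2}$ must vanish once $k>1$ (resp.\ $k>2$). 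You instead extract the common factor $(-1)^{m-1}\prod_{l=1}^{m-1}(s_{i_l}-s_{i_{l+1}})$ so the two sums become, up to that factor, $\sum_r 1/\prod_{l\neq r}(s_{i_r}-s_{i_l})$ and $\sum_r s_{i_r}/\prod_{l\neq r}(s_{i_r}-s_{i_l})$, which are the classical Lagrange-interpolation (equivalently, residue) identities: both vanish for $m\geq 3$, the first also for $m=2$, and the second equals $1$ for $m=2$, giving exactly the off-diagonal term $(s_j-s_i)t^{a^w_{ji}+1}A_{ji}$ after reattaching the sign and Vandermonde-type prefactor. This route is shorter and invokes a standard fact; the paper's degree-counting argument is more self-contained and uniform across both identities. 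One thing to tighten: for the first identity you only indicate a plan (``by induction \ldots\ or by interpreting as residues''); since you have the residue machinery at hand for the second identity anyway, you should simply use it there too, noting that $\sum_r 1/\prod_{l\neq r}(s_{i_r}-s_{i_l})$ is the leading coefficient of the Lagrange interpolant of the constant function $1$ and so vanishes whenever $m\geq 2$. Also, a cosmetic point shared with the paper: the final sentence deriving $(M^w)^{-1}tsM^w\in{}^w\mathfrak b$ from the displayed valuations deserves one more line unwinding the definition of $a^w_{ji}$, since $a^w_{ji}$ is defined via the root $e_i-e_j$ while the $(j,i)$-entry sits in the $e_j-e_i$ root space.
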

\begin{proof}
	To check that the matrix with the above coefficients is the inverse it is enough to check
	$$\sum_{l=1}^{k} c_{i_1i_2...i_l}c'_{i_{l}i_2...i_k}=\begin{cases}
	1, & \text{if } k=1\\
	0, & \text{else}
	\end{cases}$$
	This is because this sum is exactly the coefficient of the monomial $\prod_{l=1}^{k-1}t^{a^w_{i_{l+1}i_l}}A_{i_{l+1}i_l}$ in the $i_ki_1$ entry of the product matrix.\\
	Note that the case $k=1$ is clear. Thus it remains to check
	$$\sum_{l=0}^{k} (-1)^l\prod_{r=1}^{l-1}\frac{s_{i_r}-s_{i_{r+1}}}{s_{r}-s_{i_{l}}}\prod_{r=l}^{k-1}\frac{s_{i_r}-s_{i_{r+1}}}{s_{l}-s_{i_{r+1}}}=0$$
	Or equivalently after clearing denominators and common factors
	$$\sum_{l=0}^{k} (-1)^l\prod_{\substack{ m<n\\m,n\neq l}}(s_{i_m}-s_{i_{n}})=0$$
	It is not hard to see that this is always divisible by $s_{i_m}-s_{i_{n}}$ for all $1\leq m<n\leq k$ by setting $s_{i_m}=s_{i_n}$ most summands vanish except the $m$ and $n$th factors. It is then easy to check that these summands are equal except for an opposite sign.\\
	Thus this polynomial is divisible by a degree ${k \choose 2}$ polynomial, but it is of degree ${k-1 \choose 2}$, so the polynomial has to be 0 if $k>1$ and for $k=1$, we clearly get $1$, ie this gives the identity matrix.\\
	Similarly to check what the matrix $(M^w)^{-1}tsM^w\in{}^w\mathfrak{b}$, it is enough to compute the following expression 
	$$\sum_{l=1}^{k} c_{i_1i_2...i_l}s_{i_l}c'_{i_{l}i_2...i_k}=0$$
	For $k>2$. That is because this gives exactly the coefficient of $\prod_{l=1}^{k-1}t^{a^w_{i_{l+1}i_l}}A_{i_{l+1}i_l}$in the $i_ki_1$ entry in $(M^w)^{-1}sM^w$. By the definition of $a^w_{ji}$ this exactly would mean that $(M^w)^{-1}tsM^w\in{}^w\mathfrak{b}$.\\
	We show this expression is $0$ unless $k\leq2$. To do this as above clearing denominators we reduce this to computing that
	$$\sum_{l=0}^{k} (-1)^ls_{i_l}\prod_{\substack{ m<n\\m,n\neq l}}(s_{i_m}-s_{i_{n}})=0$$
	But just as above this is always divisible by $s_{i_m}-s_{i_{n}}$ for all $1\leq m<n\leq k$
	and hence we get a ${k-1 \choose 2}+1$ degree polynomial being divisible by a ${k \choose 2}$ degree, so this yields the result for $k>2$\\ 
	Thus we get the entries of $(M^w)^{-1}tsM^w$ are given by 
	$$((M^w)^{-1}tsM^w)_{ji}=t^{a^w_{ji}+1}A_{ji}(s_j-s_i)$$
	And thus the result follows.
\end{proof}
\subsection{Determinants and fixed points}
We first note that we can reduce the computation of the set of fixed points of a component of $\mathcal{F}l_\gamma$ described by $F$ as above to computing a single element in the $W_f$-orbit.\\ To check this note that for any lift $\dot{w}$ of $w\in W_f$ $\dot{w}\mathcal{F}l_\gamma=\mathcal{F}l_{{}^w\gamma}$. Further as we've considered intersections with $G(\mathcal{O})$-orbits and $\dot{w}\in G$ we see the component $Y^s_x$ corresponding to $x\in F$ go to the corresponding component $Y^{wsw^{-1}}_x$ in the other affine Springer fiber. So if we check $y\in W$ is in $Y_x$ for every regular semisimple $s$, then $wy\in Y_x$ for any $w\in W_f$.\\
This means we can reduce the computation to checking that $\{y\leq x |y\in {}^fW\}$ are contained in $Y_x$ for $x\in F$.\\
Now we recall we need to compute the intersection of $M^xx\mathfrak{B}/\mathfrak{B}$ and $U_y$ is not empty, which by the description of $U_y$ amounts to checking $$M^xx<e_1,...e_i,te_{i+1},...te_n>\cap y<t^{-1}e_1,...t^{-1}e_i,e_{i+1},...e_n>_{t^{-1}}\neq {0}$$
For some matrix $M^x$ of the same form as given above for every $i$ and every element $y$ in $\{y\leq x |y\in {}^fW\}$.\\
In fact we will check a stronger condition. $F$ has a unique longest element, which we denote by $w_F$ and in fact $\forall x\in F$ $\exists z\in W$ such that $w_F=xz$ and $l(w_F)=l(x)+l(z)$. Then from this it follows that ${}^{w_F}\mathfrak{b}\cap\mathfrak{b}\subset{}^x\mathfrak{b}\cap\mathfrak{b}$ for $x\in F$ and thus $M^{w_F}x\mathfrak{B}/\mathfrak{B}\subset M^xx\mathfrak{B}/\mathfrak{B}$. We will check the condition that this subset intersects non-trivially with $U_y$ for $y\leq x$.\\
The condition for intersection can be rewritten in terms of certain determinants not vanishing. To describe this we first introduce a large rectangular matrix. To do this we introduce the notation $M_{ji}^{(k)}$ for the coefficient of $t^k$ in $M^{w_F}_{ji}$, ie $M_{ji}^{(k)}=\sum_{i=i_1<i_2<...<i_{j-1+1-k}=j}c_{i_1i_2...i_{j-i+1-k}}\prod_{l=1}^{j-i-k} A_{i_{l+1}i_l}$. With this notation the matrix is as follows\\
\\
\[\left[\begin{array}{c|c|c|c}
	\begin{matrix}
	1&0&\dots& 0\\
	M^{(0)}_{21}&1&\dots&0\\
	\vdots& &\ddots& \\
	& & &1\\
	\vdots&&&\vdots\\
	M_{n1}^{(0)}&\dots&\dots& M_{nj_0}
	\end{matrix}&0&\dots&0\\ \hline
	\begin{matrix}
	0&0&\dots& 0\\
	0&0&\dots&0\\
	M_{31}^{(1)}&0&\dots&0\\
	M_{41}^{(1)}&M_{42}^{(1)}&\dots&0\\
	\vdots& &\ddots& \\
	& & &M^{(1)}_{j_1+2j_1}\\
	\vdots&&&\vdots\\
	M_{n1}^{(1)}&\dots&\dots& M^{(1)}_{nj_1}
	\end{matrix}&\begin{matrix}
	1&\dots& 0\\
	M^{(0)}_{21}&\dots&0\\
	M^{(0)}_{31}&\ddots&0\\
	M^{(0)}_{41}&\ddots&0\\
	\vdots&\ddots& \\
	& &1\\
	\vdots&&\vdots\\
	M_{n1}^{(0)}&\dots& M^{(0)}_{nj_2}
	\end{matrix}&\dots&0\\ \hline
	\vdots&&\ddots&0\\ \hline
	\begin{matrix}
	0&\dots& 0\\
	\vdots&&\vdots\\
	M_{k+21}^{(k)}&\dots&0\\
	M_{k+31}^{(k)}&\dots&0\\
	\vdots& \ddots& \\
	&  &M^{(k)}_{j_1+k+1j_1}\\
	\vdots&&\vdots\\
	M_{n1}^{(k)}&\dots& M^{(k)}_{nj_1}
	\end{matrix}&
	\begin{matrix}
	0&\dots& 0\\
	\vdots&&\vdots\\
	M_{k+11}^{(k-1)}&\dots&0\\
	M_{k+21}^{(k-1)}&\dots&0\\
	\vdots& \ddots& \\
	&  &M^{(k-1)}_{j_2+kj_2}\\
	\vdots&&\vdots\\
	M_{n1}^{(k-1)}&\dots& M^{(k-1)}_{nj_2}
	\end{matrix}&\dots&
	\begin{matrix}
	1&\dots& 0\\
	M^{(0)}_{21}&\dots&0\\
	\vdots& \ddots& \\
	& &1\\
	\vdots&&\vdots\\
	M_{n1}^{(0)}&\dots& M_{nj_k}
	\end{matrix}
\end{array}\right]
\]\\
\\
Here this matrix depends on $x$ and $0\leq i<n$ via $j_l=\sharp\{x(\omega_i)_r\geq x(\omega_i)_1-l\}$ and $k=x(\omega_i)_1-x(\omega_i)_n-1$. Note that this $k$ is a positive number as by the combinatorial lemmas $x(\omega_i)$ is dominant. Here for $i=0$ we mean the similar conditions for $x(0)$.\\
Note that this matrix is just given by considering the vectors in\\ $M^xx<e_1,...e_i,te_{i+1},...te_n>$ in terms of the basis $t^re_j$, where the matrix is finite, because we use the condition that $x<e_1,...e_i,te_{i+1},...te_n>\supset t^{-x(\omega_i)_n}<e_1,...e_n>$\\
The determinant condition now consists of determinants of submatrices of the above, given by choosing the first $\sharp\{y(\omega_i)_j\geq x(\omega_i)_1-r\}$ rows of the $r$th block of rows. That is because these are exactly the entries whose corresponding basis element does not lie in $y<t^{-1}e_1,...t^{-1}e_i,e_{i+1},...e_n>_{t^{-1}}$ and hence to find an element in the intersection we need to find a non-trivial linear combination of the above vectors that has 0 entries corresponding to all these basis elements, or in other words we need to show that the square submatrix described is non-singular. Thus to show that the intersection above is trivial is equivalent to showing that these determinants do not vanish for some matrix $M^{w_F}$.\\
Note here we have marked some blocks that horizontally delimit the entries of a fixed valuation in a vector and vertically delimit the set of vectors with fixed minimal valuation.
We introduce some definitions
\begin{definition}
	\begin{enumerate}
		\item  We call the set of rows between 2 consecutive horizontal lines a fixed level sets
		\item  We call the set of columns between 2 consecutive vertical lines a set of fixed valuation vectors.
		\item  We call the entries delimited by 2 consecutive horizontal and 2 consecutive vertical lines a block of the matrix
		\item We call the diagonal block of the corresponding fixed valuation set as the row pivot and the corresponding diagonal block in the set of fixed valuation vectors as the column pivot
	\end{enumerate}
\end{definition} 
Here if we refer to the $k$th level set, we mean the $k$th fixed level starting from the top of the above matrix and similarly for the $k$th valuation vectors.
\subsection{Degree maximization algorithm}
We will check that the above determinants do not vanish identically on the open part of the component above described by finding an appropriately "high" degree monomial in some sense, whose coefficient is non-zero.\\
First we use the combinatorial lemmas above to note that the condition $y\leq x$ is equivalent to the condition
$$\sum_{j=r}^{x(\omega_l)_1}\sharp\{x(\omega_l)_i\geq j\}\leq \sum_{j=r}^{y(\omega_l)_1}\sharp\{y(\omega_l)_i\geq j\}\forall r,l$$
Note that this condition is equivalent to the property that the square matrix for which we are taking the determinant has more non-zero columns than rows, when looking at the first $r$ fixed level sets. This is another way of seeing that the condition of $y\leq x$ is necessary.\\
Now we will describe the monomial we're looking at. To do this we want to find a monomial that is only contributed by products of determinants coming from some square submatrices, one for each of the above fixed level sets. To see how to construct these submatrices and hence the monomial, we follow a greedy algorithm.\\
\begin{alg}
	Starting from the first block we counstruct the square submatrices by induction. We follow the algorithm as described 
\begin{enumerate}
\item  Once the square submatrix for the first $k-1$ fixed level set are chosen, construct the matrix of the $k$th level set as follows:\\
Take the first $min(\sharp\{x(\omega_i)_j\geq x(\omega_i)_1+1-k\},\sharp\{y(\omega_i)_j\geq x(\omega_i)_1+1-k\})$ columns corresponding to the $k$th valuation vectors, ie the number of columns of those fixed valuation vectors corresponding to the number of rows in the corresponding fixed level set, if there are fewer $k$ valuation vectors than rows in the corresponding fixed level set, and all of them otherwise.
\item If we still need to get more columns to have a square matrix we add all the columns not yet picked of the $k-1$st valuation vectors
\item If we still need to get more columns to have a square matrix proceed to the $k-2$nd and continue inductively.
\item We reach the $s$th valuation vectors for which we do not need to add all of them. For this set choose the first few columns necessary to complete the square submatrix for this fixed level set.
\item Continue constructing the next square submatrix
\end{enumerate}
\end{alg}
The monomial is now constructed as follows; each square determinant of the $k$th level set will contribute the following product of variables:
\begin{enumerate}
	\item For each column of the $k$th set of valuation vectors we just contribute 1
	\item For each column of the $k-1$st set of valuation vectors, if this is not the last set contributing columns, we denote by $c_k$ the largest integer $j$ such that $\lambda_j\geq\lambda_1-(k-1)$, by $r_k$ the largest integer $i$ such that $\mu_i\geq\lambda_1-(k-1)$ and by $m_1$ the number of columns of the $k-1$st set in the determinant of this block. Then this contributes to the monomial given by the product $\prod_{j=0}^{m_1-1}A_{c_k-jc_k-2-j}\prod_{i=c_{k-1}+m_1-j}^{c_k-3-j}A_{i+1i}$
	\item Similalry continuing inductively, for each column of the $k-s$th set, if this is not the last set contributing columns, we denote by $m_s$ the number of columns of the $k-s$th set in the determinant of this block, starting with the $l_s$. Then this contributes to the monomial the product of
	$\prod_{j=0}^{m_s-1} A_{c_k-\sum_{i=1}^{s-1}m_s-j{c_k-\sum_{i=1}^{s-1}m_i-s-1-j}} \prod_{i=l_s+m_s-j-1}^{c_k-\sum_{i=1}^{s-1}m_i-s-2-j} A_{i+1i}$ 
	\item Finally for each column of the $k-s$th set, if this is the last set contributing columns, we denote by $m_s$ the number of columns of the $k-s$th set in the determinant of this block and by $l_s$ the first column, Then this contributes to the monomial the product of
	 $\prod_{j=0}^{m_s-1}A_{l_s+s+j+1l_s+j}\prod_{i=l_s+s+j+1}^{m_0+j-1}A_{i+1i}$
\end{enumerate}
We check that the square determinants that we describe are the only ways to get these monomials.\\
To see this we start by considering maximizing degree for variables $A_{ij}$ where $i-j$ is large, ie we start by maximizing degree of $A_{n1}$, then of $A_{n2}$ and $A_{n-11}$ and so on in a lexicographic order. Note that these variables appear bellow some diagonal of blocks and further note that strictly below a diagonal they appear in entries of the matrix with monomials of strictly higher degree, so finding a monomial maximizing this will have to be given by these variables coming only from the corresponding block diagonal.\\
Now assume we have a square block lower triangular matrix (note the blocks, including the diagonal ones, do not need to be square and in fact can have $0$ rows). Further assume that there are as many blocks in the columns as in the rows (where we are counting empty blocks) and that the entries have a variable $x_i$ if the block is on the $ith$ subdiagonal under the diagonal of blocks. Ie we want a matrix that looks as follows
\[M=\left[
\begin{array}{c|c|c|c|c}
M_{11}&0&0&\dots &0\\ \hline
x_1M_{21}& M_{22}&0&\dots&0\\ \hline
x_2M_{31}& x_1M_{32}&M_{33}&\dots&0\\ \hline
\vdots& &&\ddots&0\\ \hline
x_{n-1}M_{n1}& x_{n-2}M_{n2}&x_{n-3}M_{n3}&\dots&M_{nn}
\end{array}\right]
\]
Here is $M_{ji}$ is an $r_j\times c_i$ for some $r_j$s and $c_i$s and further we have the condition that $\sum_{i=1}^{k}r_i\leq\sum_{i=1}^{k}c_i$, otherwise the determinant is always 0\\
We want to maximize the degree of $x_i$ in a dictionary order in computing the determinant of $M$, ie we first want to maximize the $x_{n-1}$ degree and among those with that maximal degree maximize the degree of $x_{n-2}$ and so on. To do this we show that a modified algorithm above gives all the ways of computing our monomial. To be precise we use the above algorithm except in Step $(1)$ and $(4)$ where we do not required to choose the first few columns, but any columns of that set. We refered to this as the modified algorithm.  We will then further check that the precise choice of monomial makes the condition of the first columns necessary.\\
Note that the modified algorithm only chooses all the columns of the first set of valuation vectors before reaching the last fixed level set if and only if $\sum_{i=1}^{k}r_i=\sum_{i=1}^{k}c_i$, ie if and only if at some valuation set you need to use all previous columns to get a square matrix. We break up the proof in the cases wheather this happens or not.
\begin{case}
Assume we have $\sum_{i=1}^{k}r_i<\sum_{i=1}^{k}c_i$ for $k\neq n$. Then the algorithm chooses an entry of $M_{n1}$ and hence uses a maximal degree element. Further it chooses it in the last step, so we can delete the corresponding column and row to get a similar matrix, still satisfying $\sum_{i=1}^{k}r_i\leq\sum_{i=1}^{k}c_i$. Applying the modified algorithm to this modified matrix is the same as looking at all the steps of the modified algorithm for $M$, without the last step. Thus by induction we can assume this gives a maximal degree element of the modified matrix and thus multiplying by $x_n$ we clearly get a maximal degree element of $M$
\end{case}
\begin{case}
	If we have $\sum_{i=1}^{k}r_i=\sum_{i=1}^{k}c_i$ for $k<n$ the matrix is block lower triangular (now with square blocks in the diagonal) of the form
	\[\begin{bmatrix}
	M_1&0\\
	N& M_2
	\end{bmatrix}
	\]
	with $M_i$ still broken up into squares with the correct degree polynomials as for $M$.\\
	The determinant breaks up as $det(M_1)det(M_2)$, so to maximize the degree, we just need to maximize it individually for $M_1$ and $M_2$ separately, but the modified algorithm, just runs first on $M_1$ and then on $M_2$ and hence by induction we can assume this gives us maximal degree elements
\end{case}
Now using the precise choice of monomial we will prove that we need to use the algorithm without modification. Thus we need to see that the construction of the monomial forces the algorithm to choose the first columns in Step $(1)$ and $(4)$\\

For Step $(1)$: Note that if $\sharp\{x(\omega_i)_j\geq x(\omega_i)_1+1-k\}\leq\sharp\{y(\omega_i)_j\geq x(\omega_i)_1+1-k\}$ we choose all the columns for this set of fixed valuation vectors, thus there is no difference with the modified algorithm. If $\sharp\{x(\omega_i)_j\geq x(\omega_i)_1+1-k\}\geq\sharp\{y(\omega_i)_j\geq x(\omega_i)_1+1-k\}$ the corresponding block will only have non-zero entries in the first $\sharp\{y(\omega_i)_j\geq x(\omega_i)_1+1-k\}$ columns. But the modified algorithm forces us to choose that many entries in this block, so we have to choose the first entries to get a non-zero contribution to the determinant.\\

For Step $(4)$: We proceed by induction on the subdiagonals of blocks. Ie we want to check that if the algorithm has to apply the modified Step $(4)$ at some block subdiagonal, if it was forced to apply the unmodified Step $(4)$ for all previous subdiagonals to get our monomial, then it will have to do so at this one too.\\
The start for the induction is the main diagonal, where it is equivalent to Step $(1)$, thus follows from the above. So assume for the first $s-1$ block subdiagonals the choices are given by the unmodifies algorithm. We proof this block by block.\\
Denote the block we're looking at by $B$ If we have to take all the remaining columns in this block, we are in the case where both algorithms agree. So assume we don't need to take all the remaining columns. Say we have to take $r$ columns in this block, starting by $l$, so the variables that should appear are $A_{s+ll}$,...$A_{s+l+r-1,l+r-1}$. We check that these variables chosen only appear in this block (given that for the previous subdiagonals we have used the unmodified algorithm).\\
First we proof this for a previous subdiagonal block. Denoted by $B'$ a previous block for which the algorithm gives us some choice of columns in the block. Then the column pivot for $B$, is further down than the row pivot of $B'$. To see this note that if this was not true, we would have to choose all the columns in the set of valuation vectors of $B$ in order to reach $B'$ using the above algorithm contradicting the fact that we have chosen columns fro $B$. Now denote by $r_{B'}$ the number of rows for the block $B'$. As the column pivot of $B$ comes after the row pivot of $B'$ we know the number of rows for the column pivot of $B$ is $\geq r_{B'}$. It then follows that $l>r_B$. It then immediately follows that the above variable can't appear in the previous blocks.\\
We now proof the result for subdiagonal blocks coming afterwards. Denote again $B'$ a block further down the subdiagonal for which the algorithm gives us some choice of column. Just as above the row pivot of $B$ appears before the column pivot of $B'$. But in order to choose from $B$ we must have chossen all the columns of the row pivot of $B$, but as $x\in F$ the number of columns in each block is strictly increasing. Now the above variables only appear in the first $l+r-1$ columns and further the column pivot of $B$ has at least $l+r-1$ columns, hence the row pivot of $B$ has strictly more and all are chosen, so for the column pivot of $B'$ we have to choose striclty more than $l+r-1$ columns, so as we choose the first of these by induction, we don't have any of the first $l+r-1$ columns to choose for $B'$. Thus in order to get the above monomials we need to follow the unmodified algorithm as required.\\

The above results forces us to choose the correct square submatrices for each fixed level set. Now we want to see that with the choice of monomial in fact it forces to choose a unique monomial in each entry and further that it breaks up further into smaller determinants, which we can compute.\\
For the blocks where we use Step $(1)$ there is a unique monomial in each entry. In the once where we apply Step $(4)$ by the above arguments we are forced to use the unique monomial including the variable as described above.\\
For the blocks where we apply the other steps note that we always choose $A_{ij}$ with $i-j$ maximal such that $i$ is maximal. Thus it forces us to choose those monomials with $A_{ij}$ in row $i$, as this is the maximal possible $i$, by proceeding by induction on subdiagonals.\\
This hence forces us to choose a particular monomial in each entry and further for the cases of Step $(2)$ and $(3)$ it forces us to choose the maximal available rows inductively in the subdiagonals.  
\subsection{Checking non-vanishing of determinants}
In this last section we compute the determinants that come out from the above algorithm and check they give a non-zero coefficient of our monomial, hence giving the required non-vanishing.\\
To do this recall that by the way we have constructed the determinants the $k-1$st columns uses up the last $m_1$ rows of the determinant corresponding to the $k$th fixed level set, the $k-2$nd the next $m_2$ and so on. Thus we can break the determinant further up into smaller determinants, each of which is given in a unique block for Step $(2)$ and $(3)$,  and one determinant given by the columns of Step $(1)$ and $(4)$ together.\\
The determinant of the square submatrices coming from steps $(2)$ and $(3)$ from a block in the $s$th subdiagonal will give us a factor of the coefficient given by the determinant of
\[\left[
\begin{BMAT}{ccc}{ccc}
\prod_{i=b}^{a-1}\frac{s_i-s_{i+1}}{s_i-s_{a+s}}&\dots&\prod_{i=b+m}^{a-1}\frac{s_i-s_{i+1}}{s_i-s_{a+s}}\\
\vdots&&\vdots\\
\prod_{i=b}^{a+m-1}\frac{s_i-s_{i+1}}{s_i-s_{a+m+s}}&\dots&\prod_{i=b+m}^{a+m-1}\frac{s_i-s_{i+1}}{s_i-s_{a+m+s}}
\end{BMAT}\right]\]
For some $a$, $b$ and $m$.\\
To see this we first note that for a block $B$ in the $s$th subdiagonal, as $x\in F$, the row pivot of $B$ has at least $s$ more columns than the column pivot. This means that the first non-zero entry in the last column of $B$ is at most 1 position down than the last column for the row pivot.\\
This shows that $\sum_{i=0}^{s} m_i<r_k$ for a block in the $s$th subdiagonal for which we apply steps $(2)$ and $(3)$, where $m_0$ is the number of columns in the row pivot and $r_k$ the number of rows of the $k$th fixed valuation set.\\
This shows that we can choose at every column at the $r$th row a non-zero monomial containing the factor $A_{rr-s}$.\\
Now we see that we can extract common non-zero factors from the above determinant, so we can reduce this to computing the determinant of 
\[\left[
\begin{BMAT}{cccc}{cccc}
1&s_{b}-s_{a+s}&\dots&\prod_{i=b}^{b+m-1}(s_i-s_{a+s})\\
1&s_{b}-s_{a+s+1}&\dots&\prod_{i=b}^{b+m-1}(s_i-s_{a+s+1})\\
\vdots&&&\vdots\\
1&s_{b}-s_{a+s+m}&\dots&\prod_{i=b}^{b+m-1}(s_i-s_{a+s+m})
\end{BMAT}\right]\]
But note that after taking appropriate linear combinations of the columns this reduces to the Vandermonde determinant, which is non-zero, as $s$ is regular semisimple.\\
Now we need to look at the case where we do not use all the remaining columns in a block. In this case we can use the columns corresponding to the row pivot, to eliminate the top elements until we are reduced to another block determinant.\\
To do this note we can just consider the entries with a factor of $A_{kl}$ for the $l$th column of the set and we look at a block in the $k-l$ subdiagonal. Then we can write the coefficients of the monomials including this variable as well as the column of the set starting with 1 as
\[\left[\begin{BMAT}{c}{ccccccc}
0\\
\vdots\\
0\\
1\\
\frac{s_l-s_k}{s_l-s_{k+1}}\\
\vdots\\
\frac{s_l-s_k}{s_l-s_{n}}\prod_{i=k}^{n-1}\frac{s_i-s_i+1}{s_i-s_{n}}
\end{BMAT} \right]\left[\begin{BMAT}{c}{ccccccc}
0\\
\vdots\\
0\\
1\\
\frac{s_k-s_{k+1}}{s_l-s_{k+1}}\\
\vdots\\
\prod_{i=k}^{n-1}\frac{s_i-s_i+1}{s_i-s_{n}}
\end{BMAT} \right]\]
Eliminating and clearing common non-zero factors we get the column
\[\left[\begin{BMAT}{c}{cccccccc}
0\\
\vdots\\
0\\
0\\
1\\
\frac{s_l-s_{k+1}}{s_l-s_{k+2}}\\
\vdots\\
\frac{s_l-s_{k+1}}{s_l-s_{n}}\prod_{i=k+1}^{n-1}\frac{s_i-s_i+1}{s_i-s_{n}}
\end{BMAT} \right]\]
Use now all the columns of the row pivot to eliminate as much as possible. Further we can eliminate non-zero common factors on rows. Then just as before the non-zero entries in a column start at most one position after the last column of the pivot, so eliminating always leaves behind a square matrix with coefficients of the form
\[\left[\begin{BMAT}{cccc}{cccc}
1&1&\dots&1\\
\frac{s_l-s_{k}}{s_l-s_{k+1}}&\frac{s_{l+1}-s_{k}}{s_{l+1}-s_{k+1}}&\dots& \frac{s_{l+m}-s_{k}}{s_{l+m}-s_{k+1}}\\
\vdots&&&\vdots\\
\frac{s_l-s_{k}}{s_l-s_{k+m}}&\frac{s_{l+1}-s_{k}}{s_{l+1}-s_{k+m}}&\dots& \frac{s_{l+m}-s_{k}}{s_{l+m}-s_{k+m}}
\end{BMAT}\right]\]
Using the last column to eliminate the first row we get the following matrix
\[\left[\begin{BMAT}{cccc}{cccc}
0&\dots&0&1\\
\frac{(s_l-s_{l+m})(s_k-s_{k+1})}{(s_l-s_{k+1})(s_{l+m}-s_{k+1})}&\dots&\frac{(s_{l+m-1}-s_{l+m})(s_k-s_{k+1})}{(s_l-s_{k+1})(s_{l+m}-s_{k+1})}& \frac{s_{l+m}-s_{k}}{s_{l+m}-s_{k+1}}\\
\vdots&&&\vdots\\
\frac{(s_l-s_{l+m})(s_k-s_{k+m})}{(s_l-s_{k+m})(s_{l+m}-s_{k+m})}&\dots&\frac{(s_l-s_{l+m})(s_k-s_{k+m})}{(s_l-s_{k+m})(s_{l+m}-s_{k+m})}& \frac{s_{l+m}-s_{k}}{s_{l+m}-s_{k+m}}
\end{BMAT}\right]\]
Note that this determinant can be reduced to the determinant of the smaller square block excluding the first row and the first column. After taking away non-zero factors we can reduce the determinant of this to computing the determinant of
\[\left[\begin{BMAT}{cccc}{cccc}
1&1&\dots&1\\
\frac{s_l-s_{k+1}}{s_l-s_{k+2}}&\frac{s_{l+1}-s_{k+1}}{s_{l+1}-s_{k+2}}&\dots& \frac{s_{l+m-1}-s_{k+1}}{s_{l+m-1}-s_{k+2}}\\
\vdots&&&\vdots\\
\frac{s_l-s_{k+1}}{s_l-s_{k+m}}&\frac{s_{l+1}-s_{k+1}}{s_{l+1}-s_{k+m}}&\dots& \frac{s_{l+m-1}-s_{k+1}}{s_{l+m-1}-s_{k+m}}
\end{BMAT}\right]\]
Note this is of the same shape as the previous determinant, hence by induction this will give a non-zero determinant as required.

\end{document}